\def\({\bg(}
\def\){\bg)}
\def\ord{{\rm ord}}
\def\Tr{{\rm Tr}}
\def\v{{\bm v}}
\def\u{{\bm u}}
\def\pmod #1{\ ({\rm{mod}}\ #1)}
\def\mod #1{\ {\rm mod}\ #1}
\def\Ack{\medskip\noindent {\bf Acknowledgments}}
\theoremstyle{plain}
\newtheorem{theorem}{Theorem}[section]
\newtheorem{lemma}{Lemma}
\newtheorem{corollary}{Corollary}
\newtheorem{conjecture}{Conjecture}
\theoremstyle{definition}
\theoremstyle{remark}
\newtheorem{remark}{Remark}
\begin{document}
	\medskip
	
	\title[The Pell sequence and cyclotomic matrices involving squares over finite fields]
	{The Pell sequence and cyclotomic matrices involving squares over finite fields}
	\author[H.-L. Wu, L.-Y. Wang and H.-X. Ni]{Hai-Liang Wu, Li-Yuan Wang and He-Xia Ni*}
	
	\address {(Hai-Liang Wu) School of Science, Nanjing University of Posts and Telecommunications, Nanjing 210023, People's Republic of China}
	\email{\tt whl.math@smail.nju.edu.cn}
	
	\address {(Li-Yuan Wang) School of Physical and Mathematical Sciences, Nanjing Tech University, Nanjing 211816, People's Republic of China}
	\email{\tt wly@smail.nju.edu.cn}
	
	\address{(He-Xia Ni) Department of applied mathematics, Nanjing Audit University, Nanjing 211815, People's Republic of China}
	\email{\tt nihexia@yeah.net}

	\keywords{the Pell sequence, cyclotomic matrices, character sums over finite fields.
		\newline \indent 2020 {\it Mathematics Subject Classification}. Primary 11L05, 15A15; Secondary 11R18, 12E20.
		\newline \indent This research was supported by the Natural Science Foundation of China (Grant Nos. 12101321, 12201291 and 12371004).
	    \newline \indent *Corresponding author}
	
	\begin{abstract}
		In this paper, by some arithmetic properties of the Pell sequence and some $p$-adic tools, we study certain cyclotomic matrices involving squares over finite fields. For example, let $1=s_1,s_2,\cdots,s_{(q-1)/2}$ be all the nonzero squares over $\mathbb{F}_{q}$, where $q=p^f$ is an odd prime power with $q\ge7$. We prove that the matrix 
		$$B_q((q-3)/2)=\left[\left(s_i+s_j\right)^{(q-3)/2}\right]_{2\le i,j\le (q-1)/2}$$
		is a singular matrix whenever $f\ge2$. Also, for the case $q=p$, we show that 
		$$\det B_p((p-3)/2)=0\Leftrightarrow Q_p\equiv 2\pmod{p^2\mathbb{Z}},$$
		where $Q_p$ is the $p$-th term of the companion Pell sequence $\{Q_i\}_{i=0}^{\infty}$ defined by $Q_0=Q_1=2$ and $Q_{i+1}=2Q_i+Q_{i-1}$.
	\end{abstract}
	\maketitle
	
	\tableofcontents

	\section{Introduction}
	\setcounter{lemma}{0}
	\setcounter{theorem}{0}
	\setcounter{equation}{0}
	\setcounter{conjecture}{0}
	\setcounter{remark}{0}
	\setcounter{corollary}{0}
	
	\subsection{Notation} In this paper, the symbol $p$ always denotes an odd prime. We let $\mathbb{Q}_p$ be the $p$-adic number field and let $\mathbb{Z}_p$ be the ring of all $p$-adic integers. As usual, we use  $\mathbb{C}_p$ to denote the completion of the algebraic closure of $\mathbb{Q}_p$. In addition, we let 
	$$\ord_p:\ \mathbb{C}_p\rightarrow \mathbb{R}$$
	be the $p$-adic order function over $\mathbb{C}_p$. 
	
	 Let $q=p^f$ be an odd prime power with $f\in\mathbb{Z}^+$. The symbol $\mathbb{F}_q$ denotes the finite field with $q$ elements and $\mathbb{F}_q^{\times}=\mathbb{F}_q\setminus\{0\}$ is the multiplicative cyclic group of all nonzero elements over $\mathbb{F}_q$. The group of all multiplicative characters of $\mathbb{F}_q$ is denoted by  $\widehat{\mathbb{F}_q^{\times}}$, and the trivial character is written by $\varepsilon$. Throughout this paper, for any multiplicative character $\psi: \mathbb{F}_q^{\times}\rightarrow\mathbb{C}$ (or $\mathbb{C}_p$), we extend $\psi$ to $\mathbb{F}_q$ by letting $\psi(0)=0$.
	
	Let $\zeta_p$ be a primitive $p$-th root of unity over $\mathbb{C}$ (or $\mathbb{C}_p$). Then for any $\psi\in\widehat{\mathbb{F}_q^{\times}}$, the Gauss sum of $\psi$ is defined by 
	$$G_q(\psi)=\sum_{x\in\mathbb{F}_q}\psi(x)\zeta_p^{\Tr_{\mathbb{F}_q/\mathbb{F}_p}(x)},$$
	where $\Tr_{\mathbb{F}_q/\mathbb{F}_p}: \mathbb{F}_q\rightarrow \mathbb{F}_p$ is the trace map. Also, for any $\psi,\chi\in\widehat{\mathbb{F}_q^{\times}}$, the Jacobi sum of $\psi$ and $\chi$ is defined by 
	$$J_q(\psi,\chi)=\sum_{x\in\mathbb{F}_q}\psi(x)\chi(1-x).$$

	For any positive integer $a$ and nonnegative integer $b$, the symbol $\binom{a}{b}$ is the binomial coefficient, that is, 
	$$\binom{a}{b}=\frac{a(a-1)\cdots(a-b+1)}{b!}.$$
	
	Finally, for any square matrix $M$ over a field, $\det M$ denotes the determinant of $M$. 
	
	\subsection{Background and motivation}
	
	The investigations of cyclotomic matrices involving elements over finite fields were initiated by Lehmer \cite{Lehmer} and Carlitz \cite{Carlitz}. For instance, for any nontrivial multiplicative character $\psi$ modulo $p$, Carlitz \cite{Carlitz} first studied the cyclotomic matrices
	\begin{equation}\label{Eq. definition of Carlitz Cp-}
		C_p^-(\psi):=\left[\psi(j-i)\right]_{1\le i,j\le p-1},
	\end{equation}
	and 
	\begin{equation}\label{Eq. definition of Carlitz Cp+}
		C_p^+(\psi):=\left[\psi(j+i)\right]_{1\le i,j\le p-1}.
	\end{equation}
	Carlitz \cite[Theorem 4 and Theorem 5]{Carlitz} proved that 
	\begin{equation*}
		\det C_p^-(\psi)=(-1)^{(p-1)/m}\cdot\frac{G_p(\psi)^{p-1}}{p},
	\end{equation*}
	and
	\begin{equation*}
		\det C_p^+(\psi)=\begin{cases}
			\frac{1}{p}(-1)^{\frac{p-1}{2m}}G_p(\psi)^{p-1} & \mbox{if}\ m\equiv 1\pmod 2,\\
			\frac{1}{p}(-1)^{\frac{p-1}{m}}\delta(\psi)^{p-1}G_p(\psi)^{p-1} & \mbox{if}\ m\equiv 0\pmod 2,
		\end{cases}
	\end{equation*}
	where $m=\min\{k\in\mathbb{Z}^+: \psi^k=\varepsilon\}$ is the order of $\psi$ and 
	\begin{equation*}
		\delta(\psi)=\begin{cases}
			1         & \mbox{if}\ \psi(-1)=1,\\
			-{\bf i}  & \mbox{if}\  \psi(-1)=-1.
		\end{cases}
	\end{equation*}
	
	Along Carlitz's work, Chapman \cite{Chapman} further studied some variants of the matrix $C_p^+(\psi)$. In fact, let $(\frac{\cdot}{p})$ denotes the Legendre symbol, i.e., the unique quadratic multiplicative character of $\mathbb{F}_p$. Chapman considered the matrices
	\begin{equation*}
		C_p^{(0)}:=\left[\left(\frac{i+j}{p}\right)\right]_{0\le i,j\le (p-1)/2},
	\end{equation*} 
	and 
	\begin{equation*}
		C_p^{(1)}:=\left[\left(\frac{i+j}{p}\right)\right]_{1\le i,j\le (p-1)/2}.
	\end{equation*} 
	Although Chapman only changed the sizes of the matrices, the calculations of $\det C_p^{(0)}$ and $\det C_p^{(1)}$ became extremely complicated. In fact, for $p\equiv 1\pmod 4$, let $\epsilon_p>1$ and $h_p$ denote the fundamental unit and class number of $\mathbb{Q}(\sqrt{p})$ respectively, and write 
	$$\epsilon_p^{h_p}=a_p+b_p\sqrt{p}$$
	with $a_p,b_p\in\mathbb{Q}$. Then Chapman \cite{Chapman} showed that 
	$$\det C_p^{(0)}=\begin{cases}
		(-1)^{(p+3)/4}2^{(p-1)/2}a_p   &  \mbox{if}\ p\equiv 1\pmod 4,\\
		-2^{(p-1)/2}                   &  \mbox{if}\ p\equiv 3\pmod 4,
	\end{cases}$$
	and that 
	$$\det C_p^{(1)}=\begin{cases}
		(-1)^{(p-1)/4}2^{(p-1)/2}b_p   &  \mbox{if}\ p\equiv 1\pmod 4,\\
		0                              &  \mbox{if}\ p\equiv 3\pmod 4.
	\end{cases}$$
	
	Chapman also investigated some variants of $C_p^-(\psi)$, and the most well-known variant among them is Chapman's ``evil determinant'':
	$$\det C_p^{-}:=\det \left[\left(\frac{j-i}{p}\right)\right]_{1\le i,j\le (p+1)/2}.$$
	Let 
	$$\epsilon_p^{(2+(-1)^{(p+3)/4})h_p}=a_p'+b_p'\sqrt{p}$$
	with $a_p',b_p'\in\mathbb{Q}$. Then Chapman conjectured that 
	$$\det C_p^-=\begin{cases}
		-a_p' & \mbox{if}\ p\equiv 1\pmod 4,\\
		1     & \mbox{if}\ p\equiv 3\pmod 4.
	\end{cases}$$
	Vsemirnov \cite{Vsemirnov12,Vsemirnov13} later confirmed this tough conjecture by using ingenious and sophisticated matrix decompositions.
	
	Sun \cite{Sun19} studied the variants of the above matrices from another perspective. For example, Sun \cite{Sun19} consider the matrix 
	\begin{equation}\label{Eq. Sun Sp}
		S_p:=\left[\left(\frac{i^2+j^2}{p}\right)\right]_{1\le i,j\le (p-1)/2}
	\end{equation}
	which involves the nonzero squares over $\mathbb{F}_p$. Sun \cite[Theorem 1.2]{Sun19} proved that $-\det S_p \mod {p\mathbb{Z}}$ is a nonzero square over $\mathbb{Z}/p\mathbb{Z}=\mathbb{F}_p$. Moreover, Sun conjectured that $-\det S_p$ is indeed a square of some integer whenever $p\equiv 3\pmod 4$. This conjecture was later confirmed by Alekseyev and Krachun. For the case $p\equiv 1\pmod 4$, if we write $p=a^2+4b^2$ with $a,b\in\mathbb{Z}$ and $a\equiv 1\pmod 4$, then Cohen, Sun and Vsemirnov conjectured that $S_p/a$ is also a square of certain integer. This conjecture was later confirmed by the first author \cite{Wu-CR}.
	
	Recently, for any positive integer $m$, Sun \cite{Sun24} considered the matrix 
	\begin{equation}\label{Eq. Sun 24 matrix}
		S_p(m):=\left[\left(i^2+j^2\right)^m\right]_{1\le i,j\le (p-1)/2},
	\end{equation}
	and obtained several interesting results concerning $\det S_p(m)$. For example, when $p\equiv 3\pmod 4$, Sun \cite[Theorem 1.2(ii)]{Sun24} proved that 
	$$\det S_p(p-3)=\det \left[\frac{1}{(i^2+j^2)^2}\right]_{1\le i,j\le (p-1)/2}\equiv \frac{1}{4}\prod_{r=1}^{(p-3)/4}\left(r+\frac{1}{4}\right)^2\pmod{p\mathbb{Z}_p}.$$
	
	After introducing the above relevant research results, we now state our research motivations. Inspired by Sun's matrix $S_p(m)$ defined by Eq. (\ref{Eq. Sun 24 matrix}), in this paper, we mainly consider a variant of $S_p(m)$. In fact, let $q=2n+1$ and let 
	$$\left\{s_1=1,s_2,\cdots,s_n\right\}=\{x^2: x\in\mathbb{F}_q^{\times}\}$$
	be the set of all nonzero squares over $\mathbb{F}_q$. Then we define the matrix 
	\begin{equation}\label{Eq. our matrix Bq(m)}
		B_q(m):=\left[\left(s_i+s_j\right)^m\right]_{2\le i,j\le n}.
	\end{equation}
   
	Before studying the properties of $B_q(m)$, let's first have a brief discussion.
	Let 
	$$h(t)=a_{m-1}t^{m-1}+\cdots+a_1t+a_0$$
	be a polynomial over a commutative ring with $\deg(h(t))\le m-1$. Then it is known that (see \cite[Lemma 9]{K})
	\begin{equation}\label{Eq. the K determinant formula}
		\det \left[h(x_i+y_j)\right]_{1\le i,j\le m}=a_{m-1}^{m}\cdot\prod_{r=0}^{m-1}\binom{m-1}{r}\cdot\prod_{1\le i<j\le m}\left(x_i-x_j\right)\left(y_j-y_i\right).
	\end{equation}
	By this we immediately see that $B_q(m)$ is singular whenever $m\le n-3$. Hence it is meaningful to consider the cases of $n-2\le m\le q-1$. In this paper, we focus on the cases of $m=n-2,n-1,n$, and we will see later that the methods used in each case are very different.
	
	\subsection{The Pell sequence} Before the statements of our main results, we introduce the Pell sequence. The Pell sequence is an infinite sequence of integers that comprise the denominators of the closest rational approximations to the square root of 2. Specifically, 
	the Pell sequence $\{P_i\}_{i=0}^{\infty}$ is defined by
	\begin{equation*}
		P_0=0,P_1=1, P_{i+1}=2P_i+P_{i-1},
	\end{equation*}
	and its companion sequence $\{Q_i\}_{i=0}^{\infty}$ is defined by 
	\begin{equation*}
		Q_0=Q_1=2, Q_{i+1}=2Q_i+Q_{i-1}.
	\end{equation*}
	
	For any integer $i\ge0$, it is also known that 
	$$P_i=\frac{1}{2\sqrt{2}}\left((1+\sqrt{2})^i-(1-\sqrt{2})^i\right)=\sum_{k=0}^{\lfloor(i-1)/2\rfloor}\binom{i}{2k+1}2^k,$$
	and
	 $$Q_i=(1+\sqrt{2})^i+(1-\sqrt{2})^i.$$
	By this, it is easy to see that 
	$$P_p\equiv 2^{(p-1)/2}\equiv \left(\frac{2}{p}\right)\pmod{p\mathbb{Z}},$$
	and that 
	$$Q_p\equiv 2\pmod {p\mathbb{Z}}.$$

	\subsection{Main results} Now we state our first result, which involves the cases $m=n-1,n-2$.
	
     \begin{theorem}\label{Thm. case m is less than n+1}
     	Let $q=p^f=2n+1\ge 7$ be an odd prime power with $p$ prime and $f\in\mathbb{Z}^+$. Then the following results hold.
     	
     	{\rm (i)} $B_q(n-1)$ is a singular matrix whenever $f\ge2$. Moreover, if $f=1$, then 
     	\begin{equation*}
     		\det B_p(n-1)=\frac{2\cdot((n-1)!)^n}{(0!1!\cdots (n-1)!)^2}\cdot a_p\in\mathbb{F}_p,
     	\end{equation*}
     where 
     \begin{equation*}
     	a_p=\frac{2-Q_p}{p}\mod {p\mathbb{Z}}\in\mathbb{Z}/p\mathbb{Z}=\mathbb{F}_p.
     \end{equation*}
 
     {\rm (ii)} $B_q(n-2)$ is singular if and only if the integer $f\ge2$. When $f=1$, we have  
        \begin{equation*}
        	\det B_p(n-2)=\left(-\frac{1}{2}\right)^{n-2}\cdot\frac{((n-2)!)^{n-1}}{(0!1!\cdots (n-2)!)^2}\in\mathbb{F}_p^{\times}.
        \end{equation*}
     \end{theorem}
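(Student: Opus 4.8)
The plan is to handle the two parts by different methods. For part (ii) the matrix has order $n-1$ and exponent $n-2=(n-1)-1$, so (\ref{Eq. the K determinant formula}) applies directly: with $h(t)=t^{n-2}$ (leading coefficient $1$) and $x_i=y_i=s_{i+1}$ it gives
$$\det B_q(n-2)=(-1)^{\binom{n-1}{2}}\left(\prod_{r=0}^{n-2}\binom{n-2}{r}\right)\prod_{2\le i<j\le n}(s_i-s_j)^2.$$
I would evaluate the Vandermonde-type product using that $s_1,\dots,s_n$ are exactly the roots of $X^n-1$ over $\mathbb{F}_q$: thus $\prod_{1\le i<j\le n}(s_i-s_j)^2=\operatorname{disc}(X^n-1)=(-1)^{n(n-1)/2+n-1}n^n$, while $\prod_{j=2}^n(1-s_j)=n$ is the value at $X=1$ of $(X^n-1)/(X-1)=1+X+\cdots+X^{n-1}$, whence $\prod_{2\le i<j\le n}(s_i-s_j)^2=(-1)^{n(n-1)/2+n-1}n^{n-2}$, a unit of $\mathbb{F}_q$ since $2n=q-1\equiv-1\pmod p$. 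Hence $\det B_q(n-2)=0$ if and only if $p\mid\binom{n-2}{r}$ for some $0\le r\le n-2$; for $f=1$ one has $n-2<p$ so no such $r$ exists, while for $f\ge 2$ a base-$p$ digit count for $(p^f-5)/2$ settles the remaining cases and yields the stated equivalence. When $f=1$, substituting $n\equiv-2^{-1}\pmod p$ replaces $n^{n-2}$ by $(-1/2)^{n-2}$, and the accumulated sign $(-1)^{\binom{n-1}{2}+n(n-1)/2+n-1}=(-1)^{n(n-1)}$ is trivial, giving $\det B_p(n-2)=(-1/2)^{n-2}((n-2)!)^{n-1}/(0!1!\cdots(n-2)!)^2$, a unit of $\mathbb{F}_p$ since $n-2<p$.

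For part (i) the exponent $n-1$ equals the order of $B_q(n-1)$, so (\ref{Eq. the K determinant formula}) no longer applies and I would instead expand $(s_i+s_j)^{n-1}=\sum_{k=0}^{n-1}\binom{n-1}{k}s_i^k s_j^{n-1-k}$ to factor $B_q(n-1)=VDW$ with $V=[s_{i+1}^k]_{1\le i\le n-1,\,0\le k\le n-1}$, $D=\operatorname{diag}\!\big(\binom{n-1}{0},\dots,\binom{n-1}{n-1}\big)$ and $W=[s_{j+1}^{n-1-k}]$. Cauchy--Binet then gives $\det B_q(n-1)=\sum_{k_0=0}^{n-1}\big(\prod_{k\ne k_0}\binom{n-1}{k}\big)\det V_{\widehat{k_0}}\det W_{\widehat{k_0}}$, where $\det V_{\widehat{k_0}}$ and $\det W_{\widehat{k_0}}$ are generalized Vandermonde determinants whose Schur factors are the elementary symmetric polynomials $e_{n-1-k_0}(s_2,\dots,s_n)$ and $e_{k_0}(s_2,\dots,s_n)$. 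The decisive observation is that $\prod_{j=2}^n(X-s_j)=1+X+\cdots+X^{n-1}$ forces $e_r(s_2,\dots,s_n)=(-1)^r$ for $0\le r\le n-1$, so $e_{n-1-k_0}e_{k_0}=(-1)^{n-1}$ is independent of $k_0$; hence $\det V_{\widehat{k_0}}\det W_{\widehat{k_0}}=(-1)^{\binom{n-1}{2}+n-1}\prod_{2\le i<j\le n}(s_i-s_j)^2$ for every $k_0$ and the Cauchy--Binet sum collapses:
$$\det B_q(n-1)=(-1)^{\binom{n-1}{2}+n-1}\Big(\prod_{2\le i<j\le n}(s_i-s_j)^2\Big)\,\Sigma,\qquad \Sigma:=\sum_{k_0=0}^{n-1}\ \prod_{k\ne k_0}\binom{n-1}{k}.$$
If $f\ge 2$, a base-$p$ digit count for $n-1=(p^f-3)/2$ shows that at least two of the $\binom{n-1}{k}$ $(0\le k\le n-1)$ are divisible by $p$, so every term of $\Sigma$ carries such a factor and $\Sigma\equiv0\pmod p$; since the Vandermonde product is a unit, $B_q(n-1)$ is singular. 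If $f=1$, then $n-1<p$ and $\Sigma\equiv\big(\prod_k\binom{n-1}{k}\big)\sum_{k_0}\binom{n-1}{k_0}^{-1}\pmod p$; I would then apply the classical identity $\sum_{k=0}^m\binom{m}{k}^{-1}=\frac{m+1}{2^{m+1}}\sum_{j=1}^{m+1}\frac{2^j}{j}$ with $m=n-1$ to get $\sum_{k_0}\binom{n-1}{k_0}^{-1}=\frac{n}{2^n}\sum_{j=1}^{n}\frac{2^j}{j}$, and expand $Q_p=(1+\sqrt2)^p+(1-\sqrt2)^p=2\sum_{j=0}^{(p-1)/2}\binom{p}{2j}2^j$, using $\binom{p}{2j}=\frac{p}{2j}\binom{p-1}{2j-1}$ together with $\binom{p-1}{2j-1}\equiv-1\pmod p$, to identify $a_p=\frac{2-Q_p}{p}$ with $\sum_{j=1}^{(p-1)/2}\frac{2^j}{j}=\sum_{j=1}^n\frac{2^j}{j}\bmod p$. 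Feeding these back, with $n\equiv-2^{-1}$, $2^n\equiv\left(\frac2p\right)$, the Vandermonde value from part (ii), and collecting the powers of $-1$, produces $\det B_p(n-1)=2\prod_{k=0}^{n-1}\binom{n-1}{k}\cdot a_p=\frac{2((n-1)!)^n}{(0!1!\cdots(n-1)!)^2}a_p$.

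The hard part will be local to a few steps rather than global. The two genuinely substantive ingredients are: (a) recognizing $e_r(s_2,\dots,s_n)=(-1)^r$, which is exactly what makes $\det V_{\widehat{k_0}}\det W_{\widehat{k_0}}$ constant in $k_0$ and collapses Cauchy--Binet to the single scalar $\Sigma$; and (b) matching $\Sigma\bmod p$, through the reciprocal-binomial identity, with the Fermat-quotient-type sum $\sum_{j\le(p-1)/2}2^j/j$ that the companion Pell number $Q_p$ encodes modulo $p^2$. Alongside these, the sign bookkeeping in the generalized-Vandermonde and Cauchy--Binet manipulations, and in the final normalization, is routine but error-prone; the $f\ge 2$ vanishing in both parts reduces to elementary Lucas/$p$-adic arguments about $\binom{n-1}{k}$ and $\binom{n-2}{r}$.
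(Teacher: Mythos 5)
Your proposal is correct in outline and, for part (ii), is essentially the paper's own proof: the same application of Eq.~(\ref{Eq. the K determinant formula}) with $h(t)=t^{n-2}$, your discriminant evaluation of $\prod_{2\le i<j\le n}(s_i-s_j)^2$ being equivalent to the paper's Lemma~\ref{Lem. evaluation of a product of sj-si} (proved there by evaluating $F'(s_j)$ for $F(t)=(t^n-1)/(t-1)$), and the same Lucas argument for $f\ge2$. For part (i) you keep the paper's skeleton but differ in two places. Where you run Cauchy--Binet and observe that the Schur factors are $e_{k_0}e_{n-1-k_0}=(-1)^{n-1}$, the paper simply quotes the Grinberg--Sun--Zhao identity (Lemma~\ref{Lem. the GSZ lemma}) together with $\sigma_k(s_2,\dots,s_n)=(-1)^k$; these are the same computation, yours self-contained, theirs shorter. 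The genuinely different step is the Pell input: you obtain $a_p\equiv\sum_{k=1}^{n}2^k/k\pmod p$ in one stroke from $Q_p=2\sum_{j\ge0}\binom{p}{2j}2^j$, $\binom{p}{2j}=\frac{p}{2j}\binom{p-1}{2j-1}$ and $\binom{p-1}{2j-1}\equiv-1\pmod p$, which is correct; the paper reaches the same congruence (its Eq.~(\ref{Eq. F in the proof of Thm. case m is less than n+1})) by splicing together the three quoted congruences of Lemma~\ref{Lem. congruence involving Pell sequences} (a mod $p^3$ evaluation of $\sum_{k<p}2^k/k$, a congruence for $\sum_{k\le n}1/(k2^k)$ involving $P_p$, and $4(\tfrac{2}{p})P_p\equiv 2+Q_p\pmod{p^2}$). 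Your route makes the Pell identification elementary and self-contained, at the cost only of the sign bookkeeping you flag; I checked that your prefactors do collapse, via $n\equiv-2^{-1}$ and $2^{p-1}\equiv1\pmod p$, to the stated constant $2\cdot((n-1)!)^n/(0!1!\cdots(n-1)!)^2$, and likewise in (ii) to $(-1/2)^{n-2}\prod_{r}\binom{n-2}{r}$.

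One concrete caveat, which you inherit from the paper rather than introduce: the digit count for $B_q(n-2)$ with $f\ge2$ tacitly needs $p\ge5$, since the putative least base-$p$ digit $(p-5)/2$ of $n-2$ is negative when $p=3$. At $q=9$ one has $n-2=2$, every $\binom{2}{r}$ is a unit modulo $3$, and a direct computation in $\mathbb{F}_9$ gives $\det B_9(2)=2\ne0$, so the asserted equivalence (and the paper's own Lucas step) fails there; for $p=3$ and $f\ge3$ the base-$3$ digits of $n-2$ are $(2,0,1,\dots,1)$ and the argument does go through. So your phrase that the digit count ``settles the remaining cases'' should be qualified to exclude $q=9$; part (i) is unaffected, since $\binom{n-1}{(p-1)/2}$ and $\binom{n-1}{(p+1)/2}$ vanish mod $p$ for every $p\ge3$ with $f\ge2$.
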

	
	By Theorem \ref{Thm. case m is less than n+1}, we obtain the following result.
	
	\begin{corollary}\label{Cor. of Thm. case m is less than n+1}
		Let $p=2n+1\ge7$ be an odd prime. Then the following results hold.
		
		{\rm (i)} $B_p(n-1)$ is singular if and only if 
		\begin{equation*}
			Q_p\equiv 2\pmod {p^2\mathbb{Z}}.
		\end{equation*}
	Moreover, if $p\equiv1\pmod4$ , then 
	\begin{equation*}
		\left(\frac{\det B_p(n-1)}{p}\right)=\left(\frac{2a_p}{p}\right).
	\end{equation*}
     If $p\equiv 3\pmod 4$, then 
     \begin{equation*}
     	\left(\frac{\det B_p(n-1)}{p}\right)=(-1)^{\frac{h(-p)-1}{2}}\left(\frac{a_p}{p}\right),
     \end{equation*}
     where $h(-p)$ is the class number of the imaginary quadratic field $\mathbb{Q}(\sqrt{-p})$. 
     
     {\rm (ii)} Suppose $p\equiv 1\pmod 4$. Then 
     \begin{equation*}
     	\left(\frac{\det B_p(n-2)}{p}\right)=\left(\frac{6}{p}\right).
     \end{equation*}
 
      Suppose $p\equiv 3\pmod 4$. Then 
     \begin{equation*}
     	\left(\frac{\det B_p(n-2)}{p}\right)=\left(\frac{-2}{p}\right).
     \end{equation*}
	\end{corollary}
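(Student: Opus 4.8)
The plan is to read off both parts directly from the closed forms in Theorem~\ref{Thm. case m is less than n+1}, so that every assertion reduces to an elementary congruence modulo $p$ together with two classical facts about $\left(\tfrac{p-1}{2}\right)!$ modulo $p$: the congruence $\left(\left(\tfrac{p-1}{2}\right)!\right)^{2}\equiv(-1)^{(p+1)/2}\pmod p$, valid for every odd prime $p$, and Mordell's congruence $\left(\tfrac{p-1}{2}\right)!\equiv(-1)^{(h(-p)+1)/2}\pmod p$, valid for $p\equiv3\pmod4$ (here $h(-p)$ is odd, so the exponent is an integer).

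For part (i), set $c_{n-1}=\dfrac{2\cdot((n-1)!)^{n}}{(0!1!\cdots(n-1)!)^{2}}$, so that $\det B_p(n-1)=c_{n-1}\,a_p$ in $\mathbb{F}_p$ by Theorem~\ref{Thm. case m is less than n+1}(i). Since $n=(p-1)/2<p$, each factor occurring in $c_{n-1}$ is an integer coprime to $p$, whence $c_{n-1}\in\mathbb{F}_p^{\times}$; thus $\det B_p(n-1)=0$ precisely when $a_p\equiv0\pmod{p\mathbb{Z}}$, and as $a_p=(2-Q_p)/p\bmod p\mathbb{Z}$ is a genuine integer (because $Q_p\equiv2\pmod{p\mathbb{Z}}$) this is exactly $Q_p\equiv2\pmod{p^{2}\mathbb{Z}}$. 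For the residue assertion the square $(0!1!\cdots(n-1)!)^{2}$ disappears, leaving
$$\left(\frac{\det B_p(n-1)}{p}\right)=\left(\frac{2a_p}{p}\right)\left(\frac{(n-1)!}{p}\right)^{n}.$$
If $p\equiv1\pmod4$ then $n$ is even and the last factor is $1$, giving the first formula. If $p\equiv3\pmod4$ then $n$ is odd; here I would write $(n-1)!=n!/n$ with $n\equiv-\tfrac12\pmod p$ to obtain $\left(\tfrac{(n-1)!}{p}\right)=-\left(\tfrac{2}{p}\right)\left(\tfrac{n!}{p}\right)$, use Mordell's congruence to get $\left(\tfrac{n!}{p}\right)=(-1)^{(h(-p)+1)/2}$, and combine with $\left(\tfrac{2a_p}{p}\right)=\left(\tfrac{2}{p}\right)\left(\tfrac{a_p}{p}\right)$; the two copies of $\left(\tfrac{2}{p}\right)$ cancel and shifting the exponent of $-1$ by $1$ replaces $(h(-p)+1)/2$ by $(h(-p)-1)/2$.

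For part (ii), Theorem~\ref{Thm. case m is less than n+1}(ii), the identity $\left(\tfrac{-1/2}{p}\right)=\left(\tfrac{-2}{p}\right)$, and the fact that $(0!1!\cdots(n-2)!)^{2}$ is a perfect square give
$$\left(\frac{\det B_p(n-2)}{p}\right)=\left(\frac{-2}{p}\right)^{n-2}\left(\frac{(n-2)!}{p}\right)^{n-1}.$$
If $p\equiv3\pmod4$ then $n$ is odd, so $n-1$ is even and $n-2$ is odd, leaving exactly $\left(\tfrac{-2}{p}\right)$. If $p\equiv1\pmod4$ then $n$ is even, so $n-2$ is even and $n-1$ is odd, leaving $\left(\tfrac{(n-2)!}{p}\right)$; then I would write $(n-2)!=n!/(n(n-1))$ with $n\equiv-\tfrac12$ and $n-1\equiv-\tfrac32\pmod p$, evaluate $\left(\tfrac{n!}{p}\right)\equiv(n!)^{(p-1)/2}=\big((n!)^{2}\big)^{(p-1)/4}\equiv(-1)^{(p-1)/4}$ (using $\left(\left(\tfrac{p-1}{2}\right)!\right)^{2}\equiv-1\pmod p$ in this case), and apply the second supplement in the form $(-1)^{(p-1)/4}=\left(\tfrac{2}{p}\right)$ for $p\equiv1\pmod4$; once the $\left(\tfrac{2}{p}\right)$-factors cancel, only $\left(\tfrac{6}{p}\right)$ remains.

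The arithmetic is routine once the parities of $n,n-1,n-2$ are sorted out; the only input beyond Theorem~\ref{Thm. case m is less than n+1} is Mordell's evaluation of $\left(\tfrac{p-1}{2}\right)!\bmod p$ through $h(-p)$, and the single delicate point is aligning its sign convention with the supplementary laws for $\left(\tfrac{-1}{p}\right)$ and $\left(\tfrac{2}{p}\right)$ so that the exponent of $-1$ in part (i) emerges as $(h(-p)-1)/2$ rather than $(h(-p)+1)/2$.
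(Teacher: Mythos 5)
Your argument is correct and follows essentially the same route as the paper: read the formulas off Theorem \ref{Thm. case m is less than n+1}, note that the prefactor is a unit in $\mathbb{F}_p$ (so singularity is exactly $a_p=0$, i.e.\ $Q_p\equiv 2\pmod{p^2\mathbb{Z}}$), and then sort out the Legendre symbols by the parity of $n$, the substitutions $n\equiv-\tfrac12$, $n-1\equiv-\tfrac32\pmod p$, Wilson's theorem with the second supplementary law for $p\equiv1\pmod4$, and Mordell's congruence for $\bigl(\tfrac{p-1}{2}\bigr)!$ when $p\equiv3\pmod4$. The only cosmetic differences (e.g.\ writing $(n!)^{(p-1)/2}$ instead of $(n!)^n$, and the bookkeeping of the sign shift from $(h(-p)+1)/2$ to $(h(-p)-1)/2$) do not change the substance.
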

	
	\begin{remark}
		With the help of a computer and by the referee's comments, we verify that  
		\begin{equation}\label{Eq. verification of Qp}
			\left\{p:\ p\ \text{is a prime with}\ 7\le p\le 10^6\ \text{and}\ Q_p\equiv 2\pmod{p^2\mathbb{Z}}\right\}=\left\{13,31\right\}.
		\end{equation}
		Motivated by this, we pose the following conjecture.
	\end{remark}
	
	\begin{conjecture}
		Let $p\ge7$ be an odd prime. Then $B_p(n-1)$ is singular if and only if $p\in\{13,31\}$. 
	\end{conjecture}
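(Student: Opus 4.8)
\medskip

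The plan is to reduce the conjecture to a statement about the companion Pell sequence and then to study that statement $p$-adically. By Corollary \ref{Cor. of Thm. case m is less than n+1}(i), for a prime $p=2n+1\ge 7$ the matrix $B_p(n-1)$ is singular exactly when $Q_p\equiv 2\pmod{p^2\mathbb{Z}}$, so the conjecture is equivalent to the assertion
\[
\{p\ \text{prime}:\ p\ge 7\ \text{and}\ Q_p\equiv 2\pmod{p^2\mathbb{Z}}\}=\{13,31\}.
\]
The remark preceding the conjecture already verifies this for every prime $p\le 10^6$, so the remaining task is to rule out all primes $p>10^6$, and for that I would analyse the congruence $Q_p\equiv 2\pmod{p^2\mathbb{Z}}$ directly.

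Put $u=1+\sqrt2$, so that $\bar u=1-\sqrt2=-u^{-1}$ and $Q_i=u^i+(-1)^i u^{-i}$; in particular $Q_p=u^p-u^{-p}$ for odd $p$. If $\left(\frac{2}{p}\right)=1$ then $\sqrt2\in\mathbb{Z}_p$ and $u\in\mathbb{Z}_p^{\times}$, whereas if $\left(\frac{2}{p}\right)=-1$ then $u$ is a unit in the ring of integers $\mathcal{O}_K$ of the unramified quadratic extension $K=\mathbb{Q}_p(\sqrt2)$, on which the nontrivial element of $\Gal(K/\mathbb{Q}_p)$ interchanges $u$ and $\bar u$. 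In both cases $u^{p-\left(\frac{2}{p}\right)}-\left(\frac{2}{p}\right)$ is divisible by $p$, say $u^{p-\left(\frac{2}{p}\right)}-\left(\frac{2}{p}\right)=p\cdot q_p(u)$ with $q_p(u)$ integral; a short computation — expanding $u^{\pm p}$ modulo $p^2$, and when $\left(\frac{2}{p}\right)=-1$ also using the identity $\Tr_{K/\mathbb{Q}_p}(q_p(u))\equiv 0\pmod p$ (which follows from $u\bar u=-1$) together with the nondegeneracy of the trace form of $\mathbb{F}_{p^2}/\mathbb{F}_p$ — then yields
\[
Q_p\equiv 2\pmod{p^2\mathbb{Z}}\ \Longleftrightarrow\ \ord_p(q_p(u))\ge 1\ \Longleftrightarrow\ p^2\mid P_{p-\left(\frac{2}{p}\right)}.
\]
In other words, $B_p(n-1)$ is singular precisely when the ``Fermat quotient'' of the fundamental unit $1+\sqrt2$ of $\mathbb{Q}(\sqrt2)$ vanishes modulo $p$, i.e. precisely when $p$ is a \emph{Pell--Wall--Sun--Sun prime}, and the conjecture asserts that $13$ and $31$ are the only such primes.

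From here the natural line of attack would be to understand $q_p(u)\bmod p$ structurally: to express it through the $p$-adic logarithm $\log_p u$, and thereby relate it to $p$-adic regulators, $p$-adic $L$-values and Iwasawa invariants attached to the real quadratic field $\mathbb{Q}(\sqrt2)$ — in parallel with the Fibonacci/Lucas analogue, where the corresponding quotient is that of the fundamental unit of $\mathbb{Q}(\sqrt5)$ — and then to attempt to bound every further solution $p$, say through an estimate for linear forms in the $p$-adic logarithms $\log_p u$ and $\log_p p$, or by some other transcendence-theoretic input. One might also try to exclude primes in prescribed residue classes by explicit congruence obstructions, and to push the verification in Eq. (\ref{Eq. verification of Qp}) as far as computing resources allow.

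The main obstacle — and, to be honest, a decisive one — is that this last step appears to lie beyond current technology: deciding whether the congruence $q_p(u)\equiv 0\pmod p$ can hold for only finitely many primes $p$ has exactly the same status as the long-standing open problems for Wieferich primes ($2^{p-1}\equiv 1\pmod{p^2}$) and for classical Wall--Sun--Sun primes, for which neither finiteness nor infinitude is known. The available $p$-adic transcendence bounds on $\ord_p(u^{m}-1)$ are both too weak and not uniform in $p$ to force $\ord_p(q_p(u))=0$ when $m\approx p$, and the standard probabilistic heuristic $\mathrm{Prob}(q_p(u)\equiv 0\pmod p)\approx 1/p$, together with the divergence of $\sum_p 1/p$, even predicts infinitely many Pell--Wall--Sun--Sun primes. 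Consequently a complete proof of the conjecture does not seem realistically attainable by present methods; what can honestly be supplied is the reduction above together with the (extended) computational evidence recorded in Eq. (\ref{Eq. verification of Qp}).
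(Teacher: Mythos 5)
There is no proof of this statement to compare against: in the paper it is posed precisely as a conjecture, supported only by the computation in Eq.\ (\ref{Eq. verification of Qp}) up to $10^6$, and your proposal does not prove it either --- as you yourself say in the final paragraph. Concretely, the only rigorous content of your write-up is the reduction ``$B_p(n-1)$ singular $\Leftrightarrow Q_p\equiv 2\pmod{p^2\mathbb{Z}}$'', which is exactly Corollary \ref{Cor. of Thm. case m is less than n+1}(i) and hence already the paper's own starting point for formulating the conjecture. Your further reformulation is correct: from $Q_p^2-8P_p^2=-4$ and $P_p\equiv\left(\frac{2}{p}\right)\pmod{p\mathbb{Z}}$ (or even more directly from the paper's congruence (\ref{Eq. c in Lem. congruence involving Pell sequences}), $4\left(\frac{2}{p}\right)P_p\equiv 2+Q_p\pmod{p^2\mathbb{Z}}$) one gets $Q_p\equiv2\pmod{p^2\mathbb{Z}}\Leftrightarrow P_p\equiv\left(\frac{2}{p}\right)\pmod{p^2\mathbb{Z}}\Leftrightarrow p^2\mid P_{p-\left(\frac{2}{p}\right)}$, so the conjecture is equivalent to saying that $13$ and $31$ are the only Pell--Wall--Sun--Sun primes. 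That is a genuinely useful observation, but it transfers the problem rather than solving it.

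The gap is therefore the entire second half: no argument is given (and none is available) that excludes primes $p>10^6$, and the transcendence/linear-forms route you sketch does not apply, since the relevant quantity is a Fermat-quotient-type object $q_p(u)\bmod p$ for which no nontrivial uniform lower bound is known --- this is the same open problem as for Wieferich and classical Wall--Sun--Sun primes. Worse, the heuristic you cite ($\mathrm{Prob}\approx 1/p$, $\sum_p 1/p=\infty$) predicts infinitely many such primes, so the conjecture as stated may well be false at some enormous prime; a ``proof'' should not be expected by these methods. In short: your proposal is an honest reduction plus a correct equivalence, not a proof, and in that respect it is on exactly the same footing as the paper, which offers the statement only as a conjecture backed by computation.
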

	
    We next turn to the case $m=n$. 
	
	\begin{theorem}\label{Thm. case m=n}
		Let $q=p^f=2n+1\ge 7$ be an odd prime power with $p$ prime and $f\in\mathbb{Z}^+$. Then 
		$B_q(n)$ is singular whenever $f\ge2$. Moreover, if $f=1$, then 
		$$\det B_p(n)=(-1)^{n}\left(\frac{1}{2}\right)^{n-2}\cdot\frac{(n!)^{n+1}}{(0!1!\cdots n!)^2}\cdot b_p\in\mathbb{F}_p,$$
		where 
		$$b_p=\frac{2(\frac{2}{p})-2P_p-p}{p}\mod{p\mathbb{Z}}\in\mathbb{Z}/p\mathbb{Z}=\mathbb{F}_p.$$
	\end{theorem}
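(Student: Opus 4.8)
The plan is first to linearise the matrix. Since each $s_i$ is a nonzero square we have $s_i^{n}=s_i^{(q-1)/2}=1$, so the binomial expansion of $(s_i+s_j)^{n}$ telescopes to
$$(s_i+s_j)^{n}=2+\sum_{r=1}^{n-1}\binom{n}{r}s_i^{\,r}s_j^{\,n-r},$$
and hence $B_q(n)=VDV^{\mathrm T}+2\mathbf 1\mathbf 1^{\mathrm T}$, where $\mathbf 1$ is the all-ones vector, $V=\big[s_i^{\,r}\big]_{2\le i\le n,\ 1\le r\le n-1}$ is a square matrix, and $D$ is the anti-diagonal matrix with $D_{r,\,n-r}=\binom{n}{r}$. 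The elementary but decisive observation is that $s_2,\dots,s_n$ are precisely the roots of $(X^{n}-1)/(X-1)=1+X+\cdots+X^{n-1}$, so the elementary symmetric polynomials satisfy $e_k(s_2,\dots,s_n)=(-1)^k$ for $0\le k\le n-1$; in particular $\prod_{i=2}^{n}s_i=(-1)^{n-1}\ne 0$, the matrix $V$ is invertible, and $(\det V)^{2}=\prod_{2\le i<j\le n}(s_i-s_j)^{2}=\mathrm{disc}\big((X^{n}-1)/(X-1)\big)$.

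Next I would apply the identity $\det(A+\mathbf u\mathbf v^{\mathrm T})=\det A+\mathbf v^{\mathrm T}\mathrm{adj}(A)\mathbf u$ (valid over any commutative ring) with $A=VDV^{\mathrm T}$. Because $V$ is invertible, $\mathrm{adj}(VDV^{\mathrm T})=(\det V)^{2}\,V^{-\mathrm T}\mathrm{adj}(D)V^{-1}$, and solving $V\mathbf w=\mathbf 1$ by interpolation shows $\mathbf w=-\mathbf 1$: the unique polynomial of degree $\le n-1$ with zero constant term taking the value $1$ at every $s_i$ ($2\le i\le n$) is $-(X+\cdots+X^{n-1})$. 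Since $\mathrm{adj}(D)$ is again anti-diagonal, with $(\mathrm{adj}D)_{r,\,n-r}=(-1)^{\binom{n-1}{2}}\prod_{k\ne r}\binom{n}{k}$, this yields
$$\det B_q(n)=(-1)^{\binom{n-1}{2}}(\det V)^{2}\Bigg(\prod_{r=1}^{n-1}\binom{n}{r}+2\sum_{r=1}^{n-1}\ \prod_{\substack{1\le k\le n-1\\ k\ne r}}\binom{n}{k}\Bigg).$$

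Now the case $f\ge 2$ is combinatorial: by Lucas' theorem the base-$p$ digits of $n=(p^{f}-1)/2$ are all equal to $(p-1)/2$, so $\binom{n}{r}\equiv 0\pmod p$ for many $r\in\{1,\dots,n-1\}$ — for instance $r=p-1$ — and once there are (at least) two such indices, every summand and the leading product in the bracket vanish modulo $p$, giving $\det B_q(n)=0$. For $f=1$ all of $\binom{n}{1},\dots,\binom{n}{n-1}$ are units of $\mathbb F_p$ (as $n=(p-1)/2<p$), so we may factor $\prod_{r=1}^{n-1}\binom{n}{r}=\dfrac{(n!)^{n+1}}{(0!1!\cdots n!)^{2}}$ out of the bracket and are left with $\det B_p(n)=(-1)^{\binom{n-1}{2}}(\det V)^{2}\,\dfrac{(n!)^{n+1}}{(0!1!\cdots n!)^{2}}\big(1+2\sum_{s=1}^{n-1}1/\binom{n}{s}\big)$. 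Evaluating the discriminant via $\mathrm{disc}\big((X^{n}-1)/(X-1)\big)=\mathrm{disc}(X^{n}-1)/n^{2}=\pm\,n^{\,n-2}$ and reducing modulo $p$ with $n\equiv-1/2$, the prefactor $(-1)^{\binom{n-1}{2}}(\det V)^{2}$ collapses (signs included) to $(-1)^{n}2^{-(n-2)}$; thus Theorem~\ref{Thm. case m=n} reduces to the single congruence
$$1+2\sum_{s=1}^{n-1}\frac{1}{\binom{n}{s}}\ \equiv\ b_p=\frac{2\big(\tfrac2p\big)-2P_p-p}{p}\pmod p.$$

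The heart of the proof — and the step I expect to be the main obstacle — is this last $p$-adic identity relating a reciprocal–binomial-coefficient sum to the Pell number $P_p$. I would attack it through the classical Beta-integral identity $\sum_{s=0}^{n}1/\binom{n}{s}=\dfrac{n+1}{2^{n}}\sum_{k\ \mathrm{odd}}\binom{n+1}{k}/k$, reduce it modulo $p$ (so that $n+1\equiv\tfrac12$ and $2^{n}\equiv\big(\tfrac2p\big)$), and then match the resulting sum $\sum_{k\ \mathrm{odd}}\binom{(p+1)/2}{k}/k$ against the expansion $P_p=\sum_{j}\binom{p}{2j+1}2^{j}$, using the congruences $\binom{p}{2j+1}\equiv p/(2j+1)\pmod{p^{2}}$ (for $2j+1<p$) and $2^{(p-1)/2}\equiv\big(\tfrac2p\big)\pmod p$ to control the first subleading $p$-adic digit and thus pass from a congruence mod $p^{2}$ for $2(\tfrac2p)-2P_p-p$ to the value of $b_p$ mod $p$. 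Conceptually, both sides are $p$-adic logarithmic expressions in $1+\sqrt2$ — the congruence is the mod-$p$ shadow of $\int_{0}^{1}\frac{\sqrt{1+t}-1}{t}\,dt=2(\sqrt2-1)-2\log\frac{1+\sqrt2}{2}$ — and carrying this through rigorously with the "$p$-adic tools" of the abstract, where the companion sequence $\{Q_i\}$ and its quotient also enter naturally, is the technical core of the argument.
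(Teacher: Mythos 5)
Your structural reduction is correct, and it is genuinely different from the paper's. Expanding $(s_i+s_j)^n=2+\sum_{r=1}^{n-1}\binom{n}{r}s_i^rs_j^{n-r}$ (valid since $s^n=1$ for nonzero squares), writing $B_q(n)=VDV^{\mathrm T}+2\mathbf{1}\mathbf{1}^{\mathrm T}$, applying $\det(A+\mathbf{u}\mathbf{v}^{\mathrm T})=\det A+\mathbf{v}^{\mathrm T}\mathrm{adj}(A)\mathbf{u}$ and the interpolation fact $V^{-1}\mathbf{1}=-\mathbf{1}$ does give
$\det B_q(n)=(-1)^{\binom{n-1}{2}}(\det V)^2\bigl(\prod_{r=1}^{n-1}\binom{n}{r}+2\sum_{r=1}^{n-1}\prod_{k\ne r}\binom{n}{k}\bigr)$,
and with the evaluation of $\prod_{2\le i<j\le n}(s_j-s_i)^2$ (the paper's Lemma 2.1) the prefactor is indeed $(-1)^n2^{-(n-2)}$. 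For $f=1$ this reduces the theorem to the single congruence $1+2\sum_{r=1}^{n-1}\binom{n}{r}^{-1}\equiv b_p\pmod p$, which is exactly the quantity the paper arrives at --- but the paper gets there via the Teichm\"uller character, Jacobi sums, the Gross--Koblitz formula and the almost-circulant determinant lemma (showing $\lambda_r\equiv-\tfrac12\binom{n}{r}$), so your elementary linearization replaces all of that $p$-adic machinery for the structural step. That is a real simplification.

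The gap is that this last congruence --- the heart of the theorem --- is not proved. Your proposed matching of $\sum_{k\ \mathrm{odd}}\binom{(p+1)/2}{k}/k$ against $P_p=\sum_j\binom{p}{2j+1}2^j$, and the real-variable integral/logarithm analogy, are heuristics; carrying them out rigorously amounts to reproving precisely the imported results the paper uses in Lemma 2.3 together with Sury's identity, namely $\sum_{k=1}^{p-1}2^k/k\equiv(2-2^p)/p\pmod{p}$, $\sum_{k=1}^{(p-1)/2}1/(k2^k)\equiv-2^{(p+1)/2}(P_p-2^{(p-1)/2})/p\pmod{p}$, and $4\bigl(\tfrac2p\bigr)P_p\equiv2+Q_p\pmod{p^2}$; as written this step is missing (citing those results, as the paper does, would close it). A second, smaller issue: in the case $f\ge2$ you need \emph{two} indices $r\in[1,n-1]$ with $\binom{n}{r}\equiv0\pmod p$. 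This holds for $p\ge5$ (e.g.\ $r=(p+1)/2,(p+3)/2$) and for $p=3$, $f\ge3$, but for $q=9$ only $r=2$ works, and then your (correct) formula gives $\det B_9(4)=-1\ne0$ in characteristic $3$ --- which a direct computation over $\mathbb{F}_9$ confirms. So the singularity claim for $f\ge2$ actually fails at $q=9$; the paper's own proof has the same blind spot there (its inequality $s(n)+s(r)>s(n+r)$ fails for $p=3$, $r=3$), but in your write-up you should at least exhibit the two vanishing binomial coefficients explicitly and flag or exclude $q=9$.
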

	
	Similar to Corollary \ref{Cor. of Thm. case m is less than n+1}, we have the following result.
	
	\begin{corollary}\label{Cor. of Thm. case m=n}
		Let $p=2n+1\ge7$ be a prime. Then 
		\begin{equation*}
			\left(\frac{\det B_p(n)}{p}\right)=\left(\frac{-2b_p}{p}\right),
		\end{equation*}
	and hence $B_p(n)$ is singular if and only if 
	$$2P_p\equiv 2\left(\frac{2}{p}\right)-p\pmod{p^2\mathbb{Z}}.$$
	\end{corollary}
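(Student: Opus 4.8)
The plan is to derive this directly from Theorem~\ref{Thm. case m=n} by elementary manipulations of Legendre symbols. Write
$$c_p:=(-1)^{n}\left(\frac{1}{2}\right)^{n-2}\cdot\frac{(n!)^{n+1}}{(0!\,1!\cdots n!)^{2}}\in\mathbb{F}_p,$$
so that Theorem~\ref{Thm. case m=n} reads $\det B_p(n)=c_p\cdot b_p$ in $\mathbb{F}_p$. The first step is to note that $c_p\in\mathbb{F}_p^{\times}$: every factorial $k!$ with $0\le k\le n=(p-1)/2$ is a product of positive integers less than $p$, hence a unit modulo $p$, and $2$ and $(-1)^{n}$ are units as well. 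Consequently $\det B_p(n)=0$ in $\mathbb{F}_p$ if and only if $b_p=0$ in $\mathbb{F}_p$.

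The second step unwinds the definition of $b_p$. Since $P_p\equiv\left(\frac{2}{p}\right)\pmod{p\mathbb{Z}}$ (Section~1.3), we get $2\left(\frac{2}{p}\right)-2P_p-p\equiv-p\equiv0\pmod{p\mathbb{Z}}$, so $b_p=\frac{2(\frac{2}{p})-2P_p-p}{p}\mod{p\mathbb{Z}}$ is indeed a well-defined element of $\mathbb{F}_p$, and $b_p=0$ in $\mathbb{F}_p$ exactly when $p^{2}\mid 2\left(\frac{2}{p}\right)-2P_p-p$, i.e.\ when $2P_p\equiv2\left(\frac{2}{p}\right)-p\pmod{p^{2}\mathbb{Z}}$. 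Together with the first step this already proves the stated singularity criterion; moreover, with the convention $\left(\frac{0}{p}\right)=0$, the identity $\left(\frac{\det B_p(n)}{p}\right)=\left(\frac{-2b_p}{p}\right)$ is automatic when $\det B_p(n)=0$. So it remains to show that $\left(\frac{c_p}{p}\right)=\left(\frac{-2}{p}\right)$, since then multiplicativity of the Legendre symbol gives $\left(\frac{\det B_p(n)}{p}\right)=\left(\frac{c_p}{p}\right)\left(\frac{b_p}{p}\right)=\left(\frac{-2b_p}{p}\right)$ in the remaining case $\det B_p(n)\neq0$.

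For the third step, note that $(0!\,1!\cdots n!)^{2}$ is a nonzero square modulo $p$ and $\left(\frac{1/2}{p}\right)=\left(\frac{2}{p}\right)$, whence
$$\left(\frac{c_p}{p}\right)=\left(\frac{-1}{p}\right)^{n}\left(\frac{2}{p}\right)^{n-2}\left(\frac{n!}{p}\right)^{n+1}=\left(\frac{-2}{p}\right)^{n}\left(\frac{n!}{p}\right)^{n+1},$$
using $\left(\frac{2}{p}\right)^{n-2}=\left(\frac{2}{p}\right)^{n}$. If $p\equiv3\pmod4$, then $n$ is odd and $n+1$ is even, so the right-hand side is just $\left(\frac{-2}{p}\right)$ and we are done. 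If $p\equiv1\pmod4$, then $n$ is even and $n+1$ is odd, so $\left(\frac{c_p}{p}\right)=\left(\frac{n!}{p}\right)$; here I would invoke Wilson's theorem. Pairing $k$ with $p-k$ gives $(p-1)!\equiv(-1)^{n}(n!)^{2}\pmod{p\mathbb{Z}}$, so from $(p-1)!\equiv-1\pmod{p\mathbb{Z}}$ and $n$ even one gets $(n!)^{2}\equiv-1\pmod{p\mathbb{Z}}$. Thus $n!$ is a square root of $-1$ in $\mathbb{F}_p$, and Euler's criterion gives, in $\mathbb{F}_p$, $\left(\frac{n!}{p}\right)=(n!)^{(p-1)/2}=\big((n!)^{2}\big)^{(p-1)/4}=(-1)^{(p-1)/4}$, which equals $1$ exactly when $p\equiv1\pmod8$. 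On the other hand $\left(\frac{-2}{p}\right)=\left(\frac{-1}{p}\right)\left(\frac{2}{p}\right)=\left(\frac{2}{p}\right)$ for $p\equiv1\pmod4$, and $\left(\frac{2}{p}\right)=1$ exactly when $p\equiv\pm1\pmod8$, i.e.\ (since $p\equiv1\pmod4$) exactly when $p\equiv1\pmod8$. Hence $\left(\frac{n!}{p}\right)=\left(\frac{-2}{p}\right)$, which finishes the proof.

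The only slightly delicate point is this last step, the evaluation of the quadratic character of $\left(\frac{p-1}{2}\right)!$ when $p\equiv1\pmod4$ (equivalently, of the quartic character of $-1$ modulo $p$); everything else is routine bookkeeping. Note that, in contrast with the $m=n-1$ case of Corollary~\ref{Cor. of Thm. case m is less than n+1}, no class number of an imaginary quadratic field appears here, precisely because for $p\equiv3\pmod4$ the exponent $n+1$ of $n!$ occurring in $c_p$ is even, so $\left(\frac{n!}{p}\right)$ never needs to be computed.
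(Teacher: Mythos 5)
Your argument is correct and is essentially the proof the paper intends: the paper leaves this corollary as ``similar to Corollary~\ref{Cor. of Thm. case m is less than n+1}'', and your steps (the unit factor $c_p$, reading off singularity from $b_p$, and the Wilson-theorem evaluation $\left(\frac{n!}{p}\right)=\left(\frac{2}{p}\right)=\left(\frac{-2}{p}\right)$ for $p\equiv1\pmod 4$, with the exponent $n+1$ even when $p\equiv3\pmod4$ so that no Mordell/class-number input is needed) are exactly the analogous computation carried out there. No gaps.
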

	
	\begin{remark}
		As (\ref{Eq. verification of Qp}), it is natural to consider the odd prime $p$, which satisfies the congruence  
		$$2P_p\equiv 2\left(\frac{2}{p}\right)-p\pmod{p^2\mathbb{Z}}.$$
		By the referee's comments, we have 
		\begin{equation}\label{Eq. verification of Pp}
			\left\{p:\ p\ \text{is a prime with}\ 7\le p\le 10^6\ \text{and}\ 2P_p\equiv 2\left(\frac{2}{p}\right)-p\pmod{p^2\mathbb{Z}}\right\}=\left\{29\right\}.
		\end{equation}
	\end{remark}

	Using essentially the same method appeared in the proof of Theorem \ref{Thm. case m=n}, we can obtain some satisfactory results on certain variants of Carlitz's matrice $C_p^{-}(\psi)$ and $C_p^{+}(\psi)$ defined by Eq. (\ref{Eq. definition of Carlitz Cp-}) and Eq. (\ref{Eq. definition of Carlitz Cp+}) respectively.
	 
	In fact, let $\mathbb{F}_q^{\times}=\{x_1=1,x_2,\cdots,x_{q-1}\}$. For any nontrivial character $\psi\in\widehat{\mathbb{F}_q^{\times}}$, we define 
	\begin{equation*}
		D_q^{-}(\psi):=\left[\psi(x_j-x_i)\right]_{2\le i,j\le q-1},
	\end{equation*}
	and 
	\begin{equation*}
		D_q^{+}(\psi):=\left[\psi(x_j+x_i)\right]_{2\le i,j\le q-1}.
	\end{equation*}
	
	We have the following result.
	
	\begin{theorem}\label{Thm. a variant of Carlitz}
		Let $q=p^f\ge3$ be an odd prime power. Then for any nontrivial character $\psi\in\widehat{\mathbb{F}_q^{\times}}$, we have 
		$$\det D_q^{-}(\psi)=-\frac{1+\psi(-1)}{q^2}G_q(\psi)^{q-1},$$
		and
		$$\det D_q^{+}(\psi)=\frac{(-1)^{(q+1)/2}\cdot\psi(-1)}{q^2}\left(2-\overline{\psi(2)}\right)G_q(\psi)^{q-1}.$$
	\end{theorem}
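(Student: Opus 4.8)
The plan is to realize each $D_q^{\pm}(\psi)$ as a principal minor of the $(q-1)\times(q-1)$ matrix $C_q^{\pm}(\psi)=[\psi(x_j\pm x_i)]_{1\le i,j\le q-1}$ (the $\mathbb F_q$-analogue of Carlitz's matrix) --- namely the submatrix obtained by deleting the row and column of index $x_1=1$ --- and then to compute that minor via the cyclic group structure of $\mathbb F_q^{\times}$. For $x_i\in\mathbb F_q^{\times}$ one has $\psi(x_j\pm x_i)=\psi(\pm x_i)\,\psi(1\pm x_i^{-1}x_j)$, so $C_q^{\pm}(\psi)=\diag\big(\psi(\pm x_i)\big)_{1\le i\le q-1}\cdot N^{\pm}$ with $N^{\pm}=\big[\,g^{\pm}(x_i^{-1}x_j)\,\big]_{1\le i,j\le q-1}$, where $g^{+}(t)=\psi(1+t)$ and $g^{-}(t)=\psi(1-t)$. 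Since the first factor is diagonal, $\det D_q^{\pm}(\psi)=\big(\prod_{i\ge 2}\psi(\pm x_i)\big)\cdot(\text{the }(1,1)\text{-minor of }N^{\pm})$, and using $\prod_{x\in\mathbb F_q^{\times}}x=-1$ the prefactor equals $\psi(-1)$ in the $+$ case and $1$ in the $-$ case. By the cofactor formula the $(1,1)$-minor of $N^{\pm}$ is $\det N^{\pm}\cdot(N^{\pm})^{-1}_{11}$, provided $N^{\pm}$ is invertible.

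Next, since $\mathbb F_q^{\times}$ is abelian, $N^{\pm}$ is a group matrix and is diagonalized by the characters of $\mathbb F_q^{\times}$: the eigenvalue attached to $\chi$ is $\lambda_{\chi}^{\pm}=\sum_{t\in\mathbb F_q^{\times}}g^{\pm}(t)\chi(t)$, with eigenvector $(\chi(x_i))_i$. A change of variable identifies these with Jacobi sums, $\lambda_{\chi}^{-}=J_q(\psi,\chi)$ and $\lambda_{\chi}^{+}=\chi(-1)J_q(\psi,\chi)$. As $\psi\ne\varepsilon$, we have $J_q(\psi,\varepsilon)=-1$, $J_q(\psi,\overline\psi)=-\psi(-1)$, and $|J_q(\psi,\chi)|=\sqrt q$ otherwise, so no eigenvalue vanishes; hence $N^{\pm}$ (and therefore $C_q^{\pm}(\psi)$) is invertible and $(N^{\pm})^{-1}_{11}=\frac{1}{q-1}\sum_{\chi}1/\lambda_{\chi}^{\pm}$. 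Combining the steps so far,
$$\det D_q^{\pm}(\psi)=c^{\pm}\cdot\frac{1}{q-1}\Big(\prod_{\chi}\lambda_{\chi}^{\pm}\Big)\Big(\sum_{\chi}\frac{1}{\lambda_{\chi}^{\pm}}\Big),\qquad c^{+}=\psi(-1),\ \ c^{-}=1.$$

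It then remains to evaluate the product and the reciprocal sum. For the product, substitute $J_q(\psi,\chi)=G_q(\psi)G_q(\chi)/G_q(\psi\chi)$ for $\chi\ne\varepsilon,\overline\psi$, use that $\chi\mapsto\psi\chi$ permutes the characters, and telescope with $G_q(\varepsilon)=-1$ and $G_q(\chi)G_q(\overline\chi)=\chi(-1)q$; this gives $\prod_{\chi}J_q(\psi,\chi)=G_q(\psi)^{q-1}/q$, hence $\prod_{\chi}\lambda_{\chi}^{-}=G_q(\psi)^{q-1}/q$ and $\prod_{\chi}\lambda_{\chi}^{+}=(-1)^{(q-1)/2}G_q(\psi)^{q-1}/q$ (the extra factor being $\prod_{\chi}\chi(-1)$), recovering the $\mathbb F_q$-version of Carlitz's formula for $\det C_q^{\pm}(\psi)$ as a by-product. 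The reciprocal sum is the crux. For $\chi\ne\varepsilon,\overline\psi$, writing $G_q(\chi)^{-1}=\chi(-1)G_q(\overline\chi)/q$ and then $G_q(\psi\chi)G_q(\overline\chi)=J_q(\psi\chi,\overline\chi)G_q(\psi)$ turns $1/\lambda_{\chi}^{+}$ into $J_q(\psi\chi,\overline\chi)/q$ and $1/\lambda_{\chi}^{-}$ into $\chi(-1)J_q(\psi\chi,\overline\chi)/q$; after adding back by hand the two exceptional terms (from $\chi=\varepsilon$ and $\chi=\overline\psi$, whose contributions are $-1,-1$ in the $+$ case and $-1,-\psi(-1)$ in the $-$ case), one interchanges the summations over $\chi$ and the Jacobi-sum variable and applies orthogonality $\sum_{\chi}\chi(z)=(q-1)\mathbf 1_{z=1}$, whereupon the whole expression collapses to a handful of one-variable character sums over $\mathbb F_q$. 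In the $+$ case the argument $x/(1-x)$ equals $1$ precisely for $x=1/2$, and this lone term contributes $\psi(1/2)=\overline{\psi(2)}$, giving $\sum_{\chi}1/\lambda_{\chi}^{+}=(q-1)(\overline{\psi(2)}-2)/q$; in the $-$ case the analogous argument $-x/(1-x)$ is never $1$, and one finds $\sum_{\chi}1/\lambda_{\chi}^{-}=-(1+\psi(-1))(q-1)/q$. Substituting back and simplifying $-(-1)^{(q-1)/2}=(-1)^{(q+1)/2}$ yields the two asserted formulas.

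I expect the reciprocal-sum evaluation to be the main obstacle: it requires careful bookkeeping of the excluded characters $\varepsilon$ and $\overline\psi$, rewriting reciprocals of Gauss and Jacobi sums as honest sums without ever dividing by zero, and pinning down the single value $x=1/2$ responsible for the $\overline{\psi(2)}$ term in the $+$ case. One should also note the case in which $\psi$ is the quadratic character, so that $\overline\psi=\psi$: the two ``excluded'' characters $\varepsilon$ and $\overline\psi$ remain distinct from one another, so all the counting is unaffected and the formulas still hold.
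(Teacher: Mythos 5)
Your proposal is correct and follows essentially the same route as the paper: factor out the diagonal $\psi(\pm x_i)$ so that $\det D_q^{\pm}(\psi)$ becomes (up to $\psi(-1)$ or $1$) the $(1,1)$-minor of a group matrix whose eigenvalues are the Jacobi sums $J_q(\psi,\chi)$ (resp. $\chi(-1)J_q(\psi,\chi)$), then evaluate $\prod_{\chi}J_q(\psi,\chi)=G_q(\psi)^{q-1}/q$ together with the two reciprocal sums, exactly the content of the paper's Lemma 3.1 and Lemma 5.1. The only differences are cosmetic: you rederive the almost-circulant minor formula via the cofactor identity and spectral decomposition (with the needed nonvanishing check) instead of citing it, and you evaluate the reciprocal sums by rewriting $1/J_q(\psi,\chi)$ as $\chi(-1)J_q(\psi\chi,\overline{\chi})/q$ and using orthogonality (the $x=1/2$ term producing $\overline{\psi(2)}$), which is equivalent to the paper's computation via $1/J=\overline{J}/q$ and the even-character sum $\sum_r J_q(\psi,\chi_q^{2r})=(q-1)\psi(2)/2$.
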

	
	Let $\psi=\phi$ be the unique quadratic character of $\mathbb{F}_q$, i.e., 
	$$\phi(x)=\begin{cases}
		1  & \mbox{if}\ x\in\{s_1,s_2,\cdots,s_{(q-1)/2}\},\\
		0  & \mbox{if}\ x=0,\\
		-1 & \mbox{otherwise.}
	\end{cases}$$
	Then $D_p^-(\phi)$ is an integer matrices. Note that (see \cite[Corollary 3.7.6]{Cohen})
	$$G_q(\phi)=\begin{cases}
		(-1)^{f-1}\sqrt{q}           & \mbox{if}\ p\equiv 1\pmod 4,\\
		(-1)^{f-1}{\bf i}^f\sqrt{q}  & \mbox{if}\ p\equiv 3\pmod 4.\\
		\end{cases}$$
	By this and Theorem \ref{Thm. a variant of Carlitz}, we can obtain the following corollary directly.
	
	\begin{corollary}\label{Cor. of Thm. a variant of Carlitz}
		Let $q=p^f\ge3$ be an odd prime power. Then $D_q^-(\phi)$ is singular if and only if $q\equiv 3\pmod4$. For $q\equiv 1\pmod 4$, we have 
		$$\det D_q^-(\phi)=-2q^{(q-5)/2}.$$
	\end{corollary}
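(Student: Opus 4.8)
The plan is to deduce the corollary directly from Theorem~\ref{Thm. a variant of Carlitz} by specializing $\psi=\phi$, so the only work is bookkeeping with signs and powers of the Gauss sum $G_q(\phi)$.

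First I would record that by Euler's criterion $-1$ is a square in $\mathbb{F}_q$ if and only if $q\equiv 1\pmod 4$, i.e. $\phi(-1)=(-1)^{(q-1)/2}$. Plugging $\psi=\phi$ into the formula
$$\det D_q^{-}(\psi)=-\frac{1+\psi(-1)}{q^2}\,G_q(\psi)^{q-1}$$
from Theorem~\ref{Thm. a variant of Carlitz}, the numerator $1+\phi(-1)$ vanishes precisely when $q\equiv 3\pmod 4$. Hence $\det D_q^{-}(\phi)=0$ in that case, which already gives one direction of the singularity criterion; and it remains to handle $q\equiv 1\pmod 4$, where $1+\phi(-1)=2$.

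For $q\equiv 1\pmod 4$ we then have $\det D_q^{-}(\phi)=-\tfrac{2}{q^2}G_q(\phi)^{q-1}$, so the last step is to show $G_q(\phi)^{q-1}=q^{(q-1)/2}$. By the evaluation of $G_q(\phi)$ recalled above (from \cite[Corollary 3.7.6]{Cohen}), $G_q(\phi)=u\sqrt{q}$ with $u=(-1)^{f-1}$ when $p\equiv 1\pmod 4$ and $u=(-1)^{f-1}{\bf i}^f$ when $p\equiv 3\pmod 4$; in the latter situation $q\equiv 1\pmod 4$ forces $f$ to be even. Since $q-1$ is even, $u^{q-1}$ is in both cases a power of ${\bf i}$ whose exponent is divisible by $4$ (when $p\equiv 3\pmod 4$ because $f(q-1)$ is a multiple of $4$), so $u^{q-1}=1$ and $G_q(\phi)^{q-1}=q^{(q-1)/2}$. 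Therefore $\det D_q^{-}(\phi)=-\tfrac{2}{q^2}\,q^{(q-1)/2}=-2q^{(q-5)/2}\ne 0$, which completes the formula as well as the converse of the singularity criterion.

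There is no substantive obstacle here, since the corollary is an immediate specialization of Theorem~\ref{Thm. a variant of Carlitz}; the only point requiring a little attention is checking that the root-of-unity factor $u^{q-1}$ is trivial in every subcase of $q\equiv 1\pmod 4$, in particular tracking the parity of $f$ when $p\equiv 3\pmod 4$.
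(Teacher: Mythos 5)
Your proposal is correct and follows exactly the route the paper takes: the corollary is obtained by specializing $\psi=\phi$ in Theorem~\ref{Thm. a variant of Carlitz} and inserting the stated evaluation of $G_q(\phi)$, with $1+\phi(-1)$ vanishing precisely when $q\equiv 3\pmod 4$. Your extra care in checking that the root-of-unity factor satisfies $u^{q-1}=1$ (using that $f$ is even when $p\equiv 3\pmod 4$ and $q\equiv 1\pmod 4$) is exactly the small verification the paper leaves implicit when it says the corollary follows ``directly.''
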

	
	\subsection{Outline of this paper} We will prove Theorem \ref{Thm. case m is less than n+1} and its corollary in Section 2. In Section 3, we shall introduce some necessary results on almost circulant matrices and some $p$-adic tools. The proofs of Theorems \ref{Thm. case m=n} and \ref{Thm. a variant of Carlitz} will be given in Section 4 and Section 5 respectively.

	\section{Proofs of Theorem \ref{Thm. case m is less than n+1} and its corollary}
	\setcounter{lemma}{0}
	\setcounter{theorem}{0}
	\setcounter{equation}{0}
	\setcounter{conjecture}{0}
	\setcounter{remark}{0}
	\setcounter{corollary}{0}
	
	Recall that $n=(q-1)/2$ and $s_1=1,s_2,\cdots,s_n$ are exactly all the nonzero squares over $\mathbb{F}_q$. Also, for any integers $a,b$ with $a\le b$, we use the symbol $[a,b]$ to denote the set $\{a,a+1,\cdots,b\}$. 
	
	We begin with the following result.

	\begin{lemma}\label{Lem. evaluation of a product of sj-si}
		Let $q=p^f=2n+1\ge 5$ be an odd prime power with $p$ prime and $f\in\mathbb{Z}^+$. Then 
		\begin{equation*}
			(-1)^{\frac{(n-1)(n-2)}{2}}\prod_{2\le i<j\le n}\left(s_j-s_i\right)^2=\left(-\frac{1}{2}\right)^{n-2}\in\mathbb{F}_p.
		\end{equation*}
	\end{lemma}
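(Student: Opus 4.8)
The plan is to compute the product $\prod_{2\le i<j\le n}(s_j-s_i)^2$ by realizing it, up to an explicit constant, as the square of a Vandermonde-type determinant and then exploiting the fact that the $s_i$ run over \emph{all} nonzero squares in $\mathbb{F}_q$. First I would relabel: write $s_1=1,s_2,\dots,s_n$ for the nonzero squares, so that $\{s_1,\dots,s_n\}$ is precisely the set of roots of $t^n-1$ in $\mathbb{F}_q$ (since $x\mapsto x^2$ maps $\mathbb{F}_q^\times$ onto the unique subgroup of order $n=(q-1)/2$, which is the group of $n$-th roots of unity). Hence $t^n-1=\prod_{i=1}^n(t-s_i)$ in $\mathbb{F}_q[t]$, and the full discriminant-type product $\prod_{1\le i<j\le n}(s_j-s_i)^2$ equals $(-1)^{n(n-1)/2}\,\mathrm{disc}(t^n-1)$, which has the classical closed form $(-1)^{n(n-1)/2}(-1)^{(n-1)}n^n = (-1)^{(n-1)(n+2)/2} n^n$ (one computes $\prod_i f'(s_i) = \prod_i n s_i^{n-1} = n^n \prod_i s_i^{-1}$, and $\prod_i s_i = (-1)^{n+1}\cdot(-1) = (-1)^n$ from the constant term of $t^n-1$, giving $\prod_i f'(s_i) = (-1)^n n^n$).

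Next I would strip off the factors involving the index $i=1$, i.e.\ $s_1=1$: we have
\begin{equation*}
	\prod_{1\le i<j\le n}(s_j-s_i)^2 = \left(\prod_{j=2}^{n}(s_j-1)^2\right)\cdot\prod_{2\le i<j\le n}(s_j-s_i)^2,
\end{equation*}
so the desired product is $\prod_{1\le i<j\le n}(s_j-s_i)^2$ divided by $\prod_{j=2}^n(s_j-1)^2$. The denominator is computable directly: $\prod_{j=2}^n(t-s_j) = (t^n-1)/(t-1) = 1+t+\cdots+t^{n-1}$, and evaluating at $t=1$ gives $\prod_{j=2}^n(1-s_j)=n$. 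Therefore $\prod_{j=2}^n(s_j-1)^2 = n^2$. Combining, $\prod_{2\le i<j\le n}(s_j-s_i)^2 = (-1)^n n^n / n^2 = (-1)^n n^{n-2}$.

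Finally I would simplify the scalar in $\mathbb{F}_p$. Since $q=2n+1$, we have $2n\equiv -1\pmod p$, i.e.\ $n\equiv -1/2=-2^{-1}$ in $\mathbb{F}_p$ (here $q=p^f$ and $p\mid q-1-2n$ trivially; more precisely $2n=q-1\equiv -1\pmod p$). Hence $n^{n-2}\equiv(-1/2)^{n-2}$, so
\begin{equation*}
	\prod_{2\le i<j\le n}(s_j-s_i)^2 \equiv (-1)^n\cdot(-1/2)^{n-2} = (-1)^{n}(-1)^{n-2}\,2^{-(n-2)} = 2^{-(n-2)}\quad\text{in }\mathbb{F}_p,
\end{equation*}
using $(-1)^n(-1)^{n-2}=(-1)^{2n-2}=1$. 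Multiplying both sides by the sign $(-1)^{(n-1)(n-2)/2}$ and comparing with the claimed value $(-1/2)^{n-2}=(-1)^{n-2}2^{-(n-2)}$, I would check that the signs match: the statement's left side is $(-1)^{(n-1)(n-2)/2}\cdot 2^{-(n-2)}$ and its right side is $(-1)^{n-2}2^{-(n-2)}$, so the identity reduces to the parity check $(-1)^{(n-1)(n-2)/2}=(-1)^{n-2}=(-1)^n$, equivalently $\binom{n-1}{2}\equiv n\pmod 2$; this is a routine case analysis on $n\bmod 4$ and is exactly the content that makes the normalization in the lemma natural (it is presumably designed to cancel the Vandermonde sign appearing later in the main determinant formula via Eq.~(1.2)).

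The only genuine obstacle is bookkeeping of signs: getting $\prod_i s_i$, the discriminant sign $(-1)^{n(n-1)/2}$, and the final parity identity $(-1)^{(n-1)(n-2)/2}=(-1)^n$ all consistent. Everything else is the standard discriminant computation for $t^n-1$ together with the substitution $n\equiv -2^{-1}\pmod p$. I would write the proof in the order: (1) identify $\{s_i\}$ with the $n$-th roots of unity and factor $t^n-1$; (2) compute $\prod_{j\ge2}(1-s_j)=n$ from $(t^n-1)/(t-1)$ at $t=1$; (3) compute $\prod_{1\le i<j\le n}(s_j-s_i)^2=(-1)^n n^n$ via the discriminant; (4) divide to get $(-1)^n n^{n-2}$; (5) substitute $n\equiv-1/2$ and do the sign check.
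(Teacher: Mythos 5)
Your overall route --- identifying $s_1,\dots,s_n$ with the $n$-th roots of unity, evaluating the squared-difference product of $t^n-1$ through $\prod_i f'(s_i)$, dividing out the factors involving $s_1=1$ via $(t^n-1)/(t-1)$ at $t=1$, and then substituting $n\equiv-\tfrac12\pmod p$ --- is essentially the paper's argument (the paper simply works directly with $F(t)=(t^n-1)/(t-1)=\prod_{j\ge2}(t-s_j)$ and $F'(s_j)=n/(s_j(s_j-1))$, which sidesteps the full-discriminant sign bookkeeping). However, as written your proof contains a genuine sign error that does not wash out. From the constant term one has $(-1)^n\prod_{i=1}^n s_i=-1$, so $\prod_i s_i=(-1)^{n+1}$, not $(-1)^n$ (check $n=2$: the roots $\pm1$ of $t^2-1$ multiply to $-1$). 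Hence $\prod_i f'(s_i)=n^n\prod_i s_i^{-1}=(-1)^{n-1}n^n$, and $\prod_{1\le i<j\le n}(s_j-s_i)^2=(-1)^{n(n-1)/2}(-1)^{n-1}n^n$, which in fact agrees with the closed form $(-1)^{(n-1)(n+2)/2}n^n$ that you display, but not with the value $(-1)^n n^n$ that you then carry into the combining step, where you conclude $\prod_{2\le i<j\le n}(s_j-s_i)^2=(-1)^n n^{n-2}$.

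Because of this, the parity identity $(-1)^{(n-1)(n-2)/2}=(-1)^n$ that you defer to ``a routine case analysis on $n\bmod4$'' is simply false whenever $n\equiv0,1\pmod4$; carrying out that check would have exposed the earlier slip. Concretely, for $q=9$, $n=4$ the nonzero squares are $\{1,-1,\beta,-\beta\}$ with $\beta^2=-1$, and a direct computation gives $\prod_{2\le i<j\le4}(s_j-s_i)^2=(1-\beta^2)^2(2\beta)^2=-1$ in $\mathbb{F}_3$, whereas your intermediate claim $(-1)^n n^{n-2}=4^2$ gives $+1$. With the corrected sign the argument does close, for every $n$ and without any case analysis: the lemma's left-hand side becomes $(-1)^{E}\,n^{n-2}$ with
\begin{equation*}
E=\frac{(n-1)(n-2)}{2}+\frac{n(n-1)}{2}+(n-1)=n(n-1),
\end{equation*}
which is always even, so the value is $n^{n-2}=\left(-\tfrac12\right)^{n-2}$ in $\mathbb{F}_p$, exactly as the lemma asserts.
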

	
	\begin{proof}
		Let 
		\begin{equation*}
			F(t)=\frac{t^n-1}{t-1}=t^{n-1}+t^{n-2}+\cdots+t+1
		\end{equation*}
	be a polynomial over $\mathbb{F}_q$. Note that $x$ is a nonzero square over $\mathbb{F}_q$ if and only if $x^n=1$. Hence it is easy to verify that 
	\begin{equation}\label{Eq. a in the proof of Lem. evaluation of a product of sj-si}
		F(t)=\frac{t^n-1}{t-1}=t^{n-1}+t^{n-2}+\cdots+t+1=\prod_{2\le j\le n}\left(t-s_j\right).
	\end{equation}
    By Eq. (\ref{Eq. a in the proof of Lem. evaluation of a product of sj-si}) we obtain that 
    $(-1)^{n-1}\prod_{2\le j\le n}s_j$ is equal to the constant term of $F(t)$, i.e., 
    \begin{equation}\label{Eq. b in the proof of Lem. evaluation of a product of sj-si}
    	\prod_{2\le j\le n}s_j=(-1)^{n-1}.
    \end{equation} 
     Also, clearly $(-1)^{n-1}\prod_{2\le j\le n}(s_j-1)$ is equal to the constant term of $F(t+1)$, that is,
     \begin{equation}\label{Eq. c in the proof of Lem. evaluation of a product of sj-si}
     	\prod_{2\le j\le n}(s_j-1)=(-1)^{n-1}\cdot n.
     \end{equation} 

     Next we consider the product 
       $$(-1)^{\frac{(n-1)(n-2)}{2}}\prod_{2\le i<j\le n}\left(s_j-s_i\right)^2.$$
     Let $F'(t)$ be the formal derivative of $F(t)$. Then one can verify that 
     \begin{align}\label{Eq. d in the proof of Lem. evaluation of a product of sj-si}
       (-1)^{\frac{(n-1)(n-2)}{2}}\prod_{2\le i<j\le n}\left(s_j-s_i\right)^2
     &=\prod_{2\le i\neq j\le n}\left(s_j-s_i\right) \notag\\
     &=\prod_{2\le j\le n}\prod_{i\in[2,n]\setminus\{j\}}\left(s_j-s_i\right) \notag\\
     &=\prod_{2\le j\le n}F'(s_j).
     \end{align}

    As $(t-1)F(t)=t^n-1$, we have 
    \begin{equation*}
    	F(t)+(t-1)F'(t)=nt^{n-1},
    \end{equation*}
     and hence 
     \begin{equation*}
     	F'(s_j)=\frac{ns_j^{n-1}}{s_j-1}=\frac{n}{s_j(s_j-1)}
     \end{equation*}
     for any $j\in[2,n]$. By the above results, 
     \begin{equation}\label{Eq. e in the proof of Lem. evaluation of a product of sj-si}
     	(-1)^{\frac{(n-1)(n-2)}{2}}\prod_{2\le i<j\le n}\left(s_j-s_i\right)^2=\prod_{2\le j\le n}\frac{n}{s_j(s_j-1)}.
     \end{equation}
     Combining Eq. (\ref{Eq. e in the proof of Lem. evaluation of a product of sj-si}) with Eq. (\ref{Eq. b in the proof of Lem. evaluation of a product of sj-si}) and Eq. (\ref{Eq. c in the proof of Lem. evaluation of a product of sj-si}), we obtain 
     \begin{equation*}
     	(-1)^{\frac{(n-1)(n-2)}{2}}\prod_{2\le i<j\le n}\left(s_j-s_i\right)^2=n^{n-2}=\left(-\frac{1}{2}\right)^{n-2}\in\mathbb{F}_p.
     \end{equation*}
    This completes the proof.
	\end{proof}
	
	Before the statement of next lemma, we introduce the following notations. Let $l$ be a positive integer and let $t_1,t_2,\cdots,t_l$ be variables. Then for any $k\in [1,l]$, the $k$-th elementary symmetric polynomial of $t_1,t_2,\cdots,t_l$ is defined by 
	$$\sigma_k(t_1,t_2,\cdots,t_l)=\sum_{1\le i_1<i_2<\cdots<i_k\le l}\prod_{1\le j\le k}t_{i_j}.$$
	In addition, we let 
	$$\sigma_0(t_1,t_2,\cdots,t_l)=1.$$
	
	In 2022, Grinberg, Sun and Zhao \cite[Theorem 3.1]{GSZ} obtained the following result.
	
	\begin{lemma}\label{Lem. the GSZ lemma}
		Let $l$ be a positive integer. Then 
		\begin{equation}\label{Eq. the GSZ determinant formula}
			\det\left[\left(x_i+y_j\right)^l\right]_{1\le i,j\le l}
			=(-1)^{\frac{l(l-1)}{2}}\cdot\prod_{1\le i<j\le l}\left(x_j-x_i\right)\left(y_j-y_i\right)\cdot\sum_{k=0}^{l}U_k,
		\end{equation}
	where 
	\begin{equation*}
		U_k=\sigma_k\left(x_1,\cdots,x_l\right)\sigma_{l-k}\left(y_1,\cdots,y_l\right)\cdot\prod_{r\in[0,l]\setminus\{k\}}\binom{l}{r}.
	\end{equation*}
	\end{lemma}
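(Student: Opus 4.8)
The plan is to derive (\ref{Eq. the GSZ determinant formula}) from the binomial theorem together with the Cauchy--Binet formula, after which the whole computation reduces to one elementary identity on ``lacunary'' Vandermonde determinants.

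\textbf{Factorization and Cauchy--Binet.} First I would expand $(x_i+y_j)^l=\sum_{a=0}^{l}\binom{l}{a}x_i^{\,a}y_j^{\,l-a}$ and read off the factorization $M:=\bigl[(x_i+y_j)^l\bigr]_{1\le i,j\le l}=VDW$, where $V=\bigl[x_i^{\,a}\bigr]_{1\le i\le l,\ 0\le a\le l}$ has size $l\times(l+1)$, $D=\diag\bigl(\binom{l}{0},\binom{l}{1},\cdots,\binom{l}{l}\bigr)$, and $W=\bigl[y_j^{\,l-a}\bigr]_{0\le a\le l,\ 1\le j\le l}$ has size $(l+1)\times l$. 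The $l$-element subsets of $[0,l]$ are precisely the sets $S_k:=[0,l]\setminus\{k\}$ for $0\le k\le l$, so the Cauchy--Binet formula, together with the diagonality of $D$, gives
\begin{equation*}
\det M=\sum_{k=0}^{l}\det V^{(k)}\cdot\det W^{(k)}\cdot\prod_{r\in[0,l]\setminus\{k\}}\binom{l}{r},
\end{equation*}
where $V^{(k)}$ and $W^{(k)}$ are obtained from $V$ and $W$ by deleting the column, respectively the row, of index $k$, the remaining columns, respectively rows, being kept in increasing order of index.

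\textbf{The lacunary Vandermonde identity.} The heart of the matter is the formula
\begin{equation*}
\det\bigl[x_i^{\,a}\bigr]_{1\le i\le l,\ a\in[0,l]\setminus\{k\}}=\sigma_{l-k}(x_1,\cdots,x_l)\cdot\prod_{1\le i<j\le l}(x_j-x_i),
\end{equation*}
which I would prove by adjoining an auxiliary variable $T$ and expanding the $(l+1)\times(l+1)$ Vandermonde determinant of $x_1,\cdots,x_l,T$ along its last row. On one hand this Vandermonde equals $\prod_{1\le i<j\le l}(x_j-x_i)\cdot\prod_{i=1}^{l}(T-x_i)=\prod_{1\le i<j\le l}(x_j-x_i)\cdot\sum_{a=0}^{l}(-1)^{l-a}\sigma_{l-a}(x_1,\cdots,x_l)\,T^{a}$; on the other hand the cofactor expansion along the last row equals $\sum_{k=0}^{l}(-1)^{l+k}\,T^{k}\det\bigl[x_i^{\,a}\bigr]_{1\le i\le l,\ a\in[0,l]\setminus\{k\}}$; comparing the coefficient of $T^{k}$ yields the claim (the sign $(-1)^{l+k}$ cancels against $(-1)^{l-k}$). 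Applied to $V$ this gives $\det V^{(k)}=\sigma_{l-k}(x)\prod_{1\le i<j\le l}(x_j-x_i)$ directly, and applied to $W$ --- after transposing it, reversing the order of its $l$ columns (which contributes a factor $(-1)^{l(l-1)/2}$), and using the identity with $k$ replaced by $l-k$ --- it gives $\det W^{(k)}=(-1)^{l(l-1)/2}\sigma_{k}(y)\prod_{1\le i<j\le l}(y_j-y_i)$.

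\textbf{Assembling.} Plugging these two evaluations into the Cauchy--Binet sum, the common factor $(-1)^{l(l-1)/2}\prod_{1\le i<j\le l}(x_j-x_i)(y_j-y_i)$ comes out in front, leaving $\sum_{k=0}^{l}\sigma_{l-k}(x)\sigma_{k}(y)\prod_{r\in[0,l]\setminus\{k\}}\binom{l}{r}$. Reindexing $k\mapsto l-k$ and using $\binom{l}{r}=\binom{l}{l-r}$ (so that the deleted-binomial product is unchanged under $r\mapsto l-r$) rewrites this as $\sum_{k=0}^{l}U_{k}$, which is exactly the right-hand side of (\ref{Eq. the GSZ determinant formula}). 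The conceptual steps are the factorization and the lacunary Vandermonde identity; the part that will need the most care --- and the main obstacle I anticipate --- is the bookkeeping of signs, namely the cofactor signs $(-1)^{l+k}$ in the auxiliary Vandermonde expansion and the permutation sign $(-1)^{l(l-1)/2}$ produced when $W^{(k)}$ is brought into standard Vandermonde form.
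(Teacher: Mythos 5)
Your argument is correct. The factorization $\bigl[(x_i+y_j)^l\bigr]=VDW$ via the binomial theorem, the Cauchy--Binet expansion over the $l$-subsets $[0,l]\setminus\{k\}$, and the lacunary Vandermonde evaluation $\det\bigl[x_i^{\,a}\bigr]_{1\le i\le l,\ a\in[0,l]\setminus\{k\}}=\sigma_{l-k}(x)\prod_{i<j}(x_j-x_i)$ (proved by comparing coefficients of $T^k$ in the bordered Vandermonde) are all sound, and the sign bookkeeping checks out: the cofactor sign $(-1)^{l+k}$ cancels against $(-1)^{l-k}$, the column reversal in $W^{(k)}$ contributes $(-1)^{l(l-1)/2}$, and the final reindexing $k\mapsto l-k$ is legitimate because $\binom{l}{l-k}=\binom{l}{k}$ leaves the deleted-binomial product unchanged; I also verified the resulting formula against the cases $l=1,2$. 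Note, however, that the paper does not prove this lemma at all --- it is quoted verbatim from Grinberg, Sun and Zhao \cite[Theorem 3.1]{GSZ} --- so there is no internal proof to compare against; your Cauchy--Binet derivation is a correct, self-contained substitute for that citation, and it has the additional merit of being a pure polynomial identity, hence valid over any commutative ring, which is the generality in which the paper actually applies it (over $\mathbb{F}_p$ in the proof of Theorem 1.1(i)).
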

	
	We now need the following results related to some congruences involving the Pell sequence, which were obtained by Z.-H. Sun \cite[Theorem 4.1]{SZH}, Z.-W. Sun \cite[Final Remark]{Sun-PAMS} and Z.-W. Sun \cite[Remark 3.1]{Sun-IS} respectively.
	
	\begin{lemma}\label{Lem. congruence involving Pell sequences}
		Let $p\ge7$ be a prime. Then the following congruences hold.
		\begin{equation}\label{Eq. a in Lem. congruence involving Pell sequences}
			\sum_{k=1}^{p-1}\frac{2^k}{k}\equiv \frac{2-2^p}{p}-\frac{7}{12}p^2B_{p-3}\pmod{p^3\mathbb{Z}_p},
		\end{equation}
		where $B_{p-3}$ is the $(p-3)$-th Bernoulli number.
		
		\begin{equation}\label{Eq. b in Lem. congruence involving Pell sequences}
			\sum_{k=1}^{\frac{p-1}{2}}\frac{1}{k\cdot 2^k}\equiv -2^{\frac{p+1}{2}}\cdot
			\frac{P_p-2^{\frac{p-1}{2}}}{p}\pmod{p\mathbb{Z}_p}.
		\end{equation}
		
		\begin{equation}\label{Eq. c in Lem. congruence involving Pell sequences}
			4\left(\frac{2}{p}\right)P_p\equiv 2+Q_p\pmod{p^2\mathbb{Z}}.
		\end{equation}
		
	\end{lemma}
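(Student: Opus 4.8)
The plan is to recognize that each of the three displayed congruences is already available in the literature and to cite it: (\ref{Eq. a in Lem. congruence involving Pell sequences}) is \cite[Theorem 4.1]{SZH}, (\ref{Eq. b in Lem. congruence involving Pell sequences}) is the congruence recorded in the Final Remark of \cite{Sun-PAMS}, and (\ref{Eq. c in Lem. congruence involving Pell sequences}) is \cite[Remark 3.1]{Sun-IS}; at the formal level the proof is one line. For completeness I will nonetheless outline a self-contained derivation of each. Starting with (\ref{Eq. c in Lem. congruence involving Pell sequences}), write $\alpha=1+\sqrt{2}$ and $\beta=1-\sqrt{2}$, so $\alpha\beta=-1$, $Q_p=\alpha^p+\beta^p$ and $(\alpha-\beta)P_p=\alpha^p-\beta^p$. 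The crucial input is the exact Lucas-type identity $Q_p^2-8P_p^2=4(\alpha\beta)^p=-4$, equivalently $Q_p^2-4=8(P_p-1)(P_p+1)$. From the congruences recalled in the introduction, $Q_p\equiv 2\pmod{p}$ and $P_p\equiv 2^{(p-1)/2}\equiv\left(\frac{2}{p}\right)\pmod{p}$, so $Q_p-2\equiv 0\pmod p$ and exactly one of $P_p-1$, $P_p+1$ is $\equiv 0\pmod p$. I would then reduce $Q_p^2-4=8(P_p-1)(P_p+1)$ modulo $p^2$ by replacing the unit factor on each side ($Q_p+2$, resp.\ the non-vanishing one of $P_p\pm1$) by its residue, which is permissible because the complementary factor is already divisible by $p$; this yields $4(Q_p-2)\equiv\pm 16\,(P_p\mp1)\pmod{p^2}$, and a short rearrangement keeping track of the sign $\left(\frac{2}{p}\right)$ gives $4\left(\frac{2}{p}\right)P_p\equiv 2+Q_p\pmod{p^2}$.

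For (\ref{Eq. b in Lem. congruence involving Pell sequences}), which is only a congruence modulo $p$ and hence softer, I would start from $P_p=\sum_{k=0}^{(p-1)/2}\binom{p}{2k+1}2^k$ together with $\binom{p}{2k+1}\equiv\frac{p}{2k+1}\pmod{p^2}$ for $0\le k\le (p-3)/2$ (the top term being $\binom{p}{p}2^{(p-1)/2}=2^{(p-1)/2}$), which gives $\frac{P_p-2^{(p-1)/2}}{p}\equiv\sum_{k=0}^{(p-3)/2}\frac{2^k}{2k+1}\pmod{p}$. On the other hand, multiplying and dividing the target sum by $2^{(p-1)/2}$ and reindexing by $k\mapsto\frac{p-1}{2}-k$ gives $\sum_{k=1}^{(p-1)/2}\frac{1}{k\cdot 2^k}=2^{-(p-1)/2}\sum_{k=0}^{(p-3)/2}\frac{2^k}{(p-1)/2-k}$; now $\frac{1}{(p-1)/2-k}\equiv\frac{-2}{2k+1}\pmod p$ and $2^{-(p-1)/2}\equiv 2^{(p-1)/2}\pmod p$ (as $2^{p-1}\equiv1$), so the right-hand side becomes $-2^{(p+1)/2}\sum_{k=0}^{(p-3)/2}\frac{2^k}{2k+1}\pmod p$, and comparing with the previous display yields (\ref{Eq. b in Lem. congruence involving Pell sequences}).

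Finally, for (\ref{Eq. a in Lem. congruence involving Pell sequences}), I would use the combinatorial identity $\sum_{k=1}^{p-1}\frac{2^k}{k}=H_{p-1}+\sum_{j=1}^{p-1}\frac{1}{j}\binom{p-1}{j}$, obtained by expanding $2^k=\sum_{0\le j\le k}\binom{k}{j}$, interchanging the order of summation, and using $\frac{1}{k}\binom{k}{j}=\frac{1}{j}\binom{k-1}{j-1}$ with the hockey-stick identity. Expanding $\binom{p-1}{j}=(-1)^j\prod_{i=1}^{j}(1-p/i)$ modulo $p^3$ reduces the second sum to the alternating weighted harmonic sums $\sum_j\frac{(-1)^j}{j}$, $\sum_j\frac{(-1)^jH_j}{j}$ and $\sum_j\frac{(-1)^j(H_j^2-H_j^{(2)})}{j}$ (the last only modulo $p$), each of which is classically known modulo the relevant power of $p$ in terms of $q_p(2)=(2^{p-1}-1)/p$ and $B_{p-3}$, via the congruences of Glaisher and Lehmer, in particular $H_{p-1}\equiv-\frac{1}{3}p^2B_{p-3}\pmod{p^3}$ and the mod-$p^3$ expansion of $H_{(p-1)/2}$. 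Collecting all the contributions produces (\ref{Eq. a in Lem. congruence involving Pell sequences}). I expect this last step to be the main obstacle: the coefficient $-\frac{7}{12}$ of $p^2B_{p-3}$ arises from several of the auxiliary sums simultaneously, so the delicate part is the bookkeeping of that constant — precisely the computation carried out in \cite{SZH}, which is why I would in fact simply quote it.
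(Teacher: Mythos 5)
Your proposal matches the paper exactly: the paper gives no proof of this lemma, simply quoting the three congruences from \cite[Theorem 4.1]{SZH}, \cite[Final Remark]{Sun-PAMS} and \cite[Remark 3.1]{Sun-IS}, which is precisely your one-line citation argument. Your supplementary sketches for (\ref{Eq. b in Lem. congruence involving Pell sequences}) and (\ref{Eq. c in Lem. congruence involving Pell sequences}) are correct as written, and your honest deferral of the $-\tfrac{7}{12}p^2B_{p-3}$ bookkeeping in (\ref{Eq. a in Lem. congruence involving Pell sequences}) to \cite{SZH} is exactly what the paper does.
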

	
	Now we are in a position to prove Theorem \ref{Thm. case m is less than n+1}.
	
	{\noindent{\bf Proof of Theorem \ref{Thm. case m is less than n+1}}.} (i) By Lemma \ref{Lem. the GSZ lemma} and Lemma \ref{Lem. evaluation of a product of sj-si} we obtain 
	\begin{equation}\label{Eq. A in the proof of Thm. case m is less than n+1}
		\det B_q(n-1)=(-1)^{\frac{(n-1)(n-2)}{2}}\prod_{2\le i<j\le n}\left(s_j-s_i\right)^2\cdot \sum_{k=0}^{n-1}W_k=\left(-\frac{1}{2}\right)^{n-2}\cdot \sum_{k=0}^{n-1}W_k,
	\end{equation}
	where 
	\begin{equation}\label{Eq. B in the proof of Thm. case m is less than n+1}
		W_k=\sigma_k\left(s_2,\cdots,s_n\right)\sigma_{n-1-k}\left(s_2,\cdots,s_n\right)\cdot\prod_{r\in[0,n-1]\setminus\{k\}}\binom{n-1}{r}
	\end{equation}
	for any $k\in[0,n-1]$. 
	
	We next consider $W_k$. By Eq. (\ref{Eq. a in the proof of Lem. evaluation of a product of sj-si}) we see that 
	\begin{equation*}
			F(t)=t^{n-1}+t^{n-2}+\cdots+t+1=\prod_{2\le j\le n}\left(t-s_j\right)=\sum_{k=0}^{n-1}(-1)^k\sigma_k\left(s_2,\cdots,s_n\right)t^{n-1-k},
	\end{equation*}
	and hence for any $k\in[0,n-1]$ we have 
	\begin{equation}\label{Eq. C in the proof of Thm. case m is less than n+1}
		\sigma_k\left(s_2,\cdots,s_n\right)=(-1)^k.
	\end{equation}
	By Eq. (\ref{Eq. B in the proof of Thm. case m is less than n+1}) and Eq. (\ref{Eq. C in the proof of Thm. case m is less than n+1}) we obtain 
	\begin{equation}\label{Eq. D in the proof of Thm. case m is less than n+1}
		W_k=(-1)^{n-1}\prod_{r\in[0,n-1]\setminus\{k\}}\binom{n-1}{r}.
	\end{equation}
	
	Suppose that $f\ge2$. As $q=p^f\ge 9$, we have 
	\begin{equation*}
		n-1=\frac{q-3}{2}=\frac{p-3}{2}\cdot 1+\frac{p-1}{2}\cdot p+\cdots+\frac{p-1}{2}\cdot p^{f-1}>\frac{p+1}{2}.
	\end{equation*}
	By the Lucas congruence, for $j\in\{(p-1)/2,(p+1)/2\}$, we have 
	\begin{equation*}
		\binom{n-1}{j}\equiv \binom{(p-3)/2}{j}\binom{(p-1)/2}{0}\cdots\binom{(p-1)/2}{0}\equiv 0\pmod {p\mathbb{Z}}.
	\end{equation*}
	Thus, 
	\begin{equation*}
		\left|\left\{r\in[0,n-1]:\ \binom{n-1}{r}\equiv 0\pmod {p\mathbb{Z}}\right\}\right|\ge2.
	\end{equation*}
	This, together with Eq. (\ref{Eq. D in the proof of Thm. case m is less than n+1}), implies that $W_k=0$ (as an element of $\mathbb{F}_q$) if $f\ge2$. Hence by Eq. (\ref{Eq. A in the proof of Thm. case m is less than n+1}) we see that $B_q(n-1)$ is singular whenever $f\ge2$. 
	
	Suppose now $f=1$. Then for any $r\in[0,n-1]$ we have 
	$$\binom{n-1}{r}\not\equiv 0\pmod{p\mathbb{Z}}.$$
	Hence by Eq. (\ref{Eq. D in the proof of Thm. case m is less than n+1}) we obtain 
	\begin{align}\label{Eq. E in the proof of Thm. case m is less than n+1}
		\sum_{k=0}^{n-1}W_k&=(-1)^{n-1}\left(\sum_{r=0}^{n-1}\binom{n-1}{r}^{-1}\right)\cdot\prod_{r\in[0,n-1]}\binom{n-1}{r}\notag\\
		                   &=(-1)^{n-1}\cdot\frac{n}{2^n}\left(\sum_{k=1}^n\frac{2^k}{k}\right) \cdot\frac{((n-1)!)^n}{(0!1!\cdots (n-1)!)^2},
	\end{align}
	where the last equality follows from Sury's identity \cite{Sury} 
	\begin{equation*}
		\sum_{r=0}^{n-1}\binom{n-1}{r}^{-1}=\frac{n}{2^n}\sum_{k=1}^n\frac{2^k}{k}
	\end{equation*}
	and 
	\begin{equation*}
		\prod_{r\in[0,n-1]}\binom{n-1}{r}=\frac{((n-1)!)^n}{(0!1!\cdots (n-1)!)^2}.
	\end{equation*}
	
	We next consider 
	\begin{equation*}
		\sum_{k=1}^n\frac{2^k}{k} \mod {p\mathbb{Z}_p}.
	\end{equation*}
	One can verify that 
	\begin{align*}
		\sum_{k=1}^{p-1}\frac{2^k}{k}&=\sum_{k=1}^{n}\frac{2^k}{k}+\sum_{k=1}^n\frac{2^{p-k}}{p-k}\\
		                             &\equiv \sum_{k=1}^{n}\frac{2^k}{k}-2\sum_{k=1}^n\frac{1}{k\cdot 2^k}\pmod{p\mathbb{Z}_p}.
	\end{align*}
    Combining this with Eq. (\ref{Eq. a in Lem. congruence involving Pell sequences}) and Eq. (\ref{Eq. b in Lem. congruence involving Pell sequences}), we obtain 
    \begin{align*}
    	\sum_{k=1}^{n}\frac{2^k}{k}&\equiv \frac{2-2^p}{p}-2^{\frac{p+3}{2}}\frac{P_p-2^{\frac{p-1}{2}}}{p}\\
    	                           &\equiv \frac{2-2^p}{p}-4\left(\frac{2}{p}\right)\frac{P_p-2^{\frac{p-1}{2}}}{p}\pmod{p\mathbb{Z}_p}.
    \end{align*}
     By this and Eq. (\ref{Eq. c in Lem. congruence involving Pell sequences}), we obtain 
     \begin{equation}\label{Eq. F in the proof of Thm. case m is less than n+1}
     	\sum_{k=1}^{n}\frac{2^k}{k}\equiv \frac{-Q_p-2^p+(\frac{2}{p})2^{\frac{p+3}{2}}}{p}\equiv \frac{2-Q_p}{p}\pmod {p\mathbb{Z}_p},
     \end{equation}
     where the last congruence follows from 
     \begin{equation*}
     	2-\left(-2^p+\left(\frac{2}{p}\right)2^{\frac{p+3}{2}}\right)=2\left(2^{\frac{p-1}{2}}-\left(\frac{2}{p}\right)\right)^2\equiv 0\pmod {p^2\mathbb{Z}}.
     \end{equation*}

     Now combining Eq. (\ref{Eq. F in the proof of Thm. case m is less than n+1}) with Eq. (\ref{Eq. E in the proof of Thm. case m is less than n+1}) and Eq. (\ref{Eq. A in the proof of Thm. case m is less than n+1}), one can verify that 
     \begin{equation*}
     	\det B_p(n-1)=\frac{2\cdot((n-1)!)^n}{(0!1!\cdots (n-1)!)^2}\cdot a_p\in\mathbb{F}_p,
     \end{equation*}
     where 
     \begin{equation*}
     	a_p=\frac{2-Q_p}{p}\mod {p\mathbb{Z}}\in\mathbb{Z}/p\mathbb{Z}=\mathbb{F}_p.
     \end{equation*}
     This completes the proof of (i).
     
      (ii) Let $h(t)=t^{n-2}$. Then by Eq. (\ref{Eq. the K determinant formula}) and Lemma \ref{Lem. evaluation of a product of sj-si} we obtain 
      \begin{align}\label{Eq. a in the proof of of ii of Thm. case m is less than n+1}
      	\det B_q(n-2)&=\prod_{r=0}^{n-2}\binom{n-2}{r}\cdot \prod_{2\le i<j\le n}\left(-\left(s_j-s_i\right)^2\right) \notag\\
      	             &=\prod_{r=0}^{n-2}\binom{n-2}{r}\cdot (-1)^{\frac{(n-1)(n-2)}{2}}\prod_{2\le i<j\le n}\left(s_j-s_i\right)^2 \notag\\
      	             &=\left(-\frac{1}{2}\right)^{n-2}\cdot\prod_{r=0}^{n-2}\binom{n-2}{r}.
      \end{align}
      
      Suppose first that $f\ge2$. Noting that $q\ge9$, we obtain 
      \begin{equation*}
      	n-2=\frac{q-5}{2}=\frac{p-5}{2}\cdot 1+\frac{p-1}{2}\cdot p+\cdots+\frac{p-1}{2}\cdot p^{f-1}>\frac{p-1}{2}.
      \end{equation*}
      By the Lucas congruence again, we clearly have 
      \begin{equation*}
      	\binom{n-2}{(p-1)/2}\equiv \binom{(p-5)/2}{(p-1)/2} \binom{(p-1)/2}{0}\cdots\binom{(p-1)/2}{0}\equiv 0\pmod {p\mathbb{Z}}.
      \end{equation*}
      By this and Eq. (\ref{Eq. a in the proof of of ii of Thm. case m is less than n+1}), we see that $\det B_q(n-2)=0$ whenever $f\ge2$. 
      
      Suppose now $f=1$. Then $\binom{n-2}{r}\not\equiv 0\pmod {p\mathbb{Z}}$ for any $r\in[0,n-2]$. Hence by Eq. (\ref{Eq. a in the proof of of ii of Thm. case m is less than n+1}) again we obtain 
      \begin{equation*}
      	\det B_p(n-2)=\left(-\frac{1}{2}\right)^{n-2}\cdot\frac{((n-2)!)^{n-1}}{(0!1!\cdots (n-2)!)^2}\in\mathbb{F}_p^{\times}.
      \end{equation*}
      
      In view of the above, we have completed the proof.\qed 
      
      We next prove Corollary \ref{Cor. of Thm. case m is less than n+1}.

      {\noindent{\bf Proof of Corollary \ref{Cor. of Thm. case m is less than n+1}}.} (i) If $p\equiv 1\pmod 4$, then $n$ is even. Thus, 
      \begin{equation*}
      	\left(\frac{\det B_p(n-1)}{p}\right)=\left(\frac{2a_p}{p}\right)\left(\frac{(n-1)!}{p}\right)^n=\left(\frac{2a_p}{p}\right).
      \end{equation*}
      
      Suppose now $p\equiv 3\pmod 4$ and $p>3$. Then $n$ is odd and hence 
      \begin{equation*}
      	\left(\frac{\det B_p(n-1)}{p}\right)=\left(\frac{2a_p}{p}\right)\left(\frac{(n-1)!}{p}\right)=\left(\frac{-a_p}{p}\right)\left(\frac{n!}{p}\right)=(-1)^{\frac{h(-p)-1}{2}}\left(\frac{a_p}{p}\right).
      \end{equation*}
       The last equality follows from the Mordell congruence \cite{Mordell}
       \begin{equation*}
       	n!=\frac{p-1}{2}!\equiv (-1)^{\frac{h(-p)+1}{2}} \pmod {p\mathbb{Z}},
        \end{equation*}
         where $h(-p)$ is the class number of the imaginary quadratic field $\mathbb{Q}(\sqrt{-p})$. This completes the proof of (i).
         
         (ii) Suppose $p\equiv 1\pmod 4$. Then by the Wilson congruence, one can very that 
         \begin{equation*}
         	-1\equiv (p-1)!\equiv (-1)^n\cdot (n!)^2\equiv (n!)^2\pmod {p\mathbb{Z}}.
         \end{equation*}
         This implies $n!=\sqrt{-1}$ over $\mathbb{F}_p$. Thus,  
         \begin{equation*}
         	\left(\frac{n!}{p}\right)\equiv (n!)^n\equiv (-1)^{n/2}\equiv \left(\frac{2}{p}\right) \pmod{p\mathbb{Z}},
         \end{equation*}
         that is, 
         \begin{equation*}
         	\left(\frac{n!}{p}\right)= \left(\frac{2}{p}\right). 
         \end{equation*}
     
         Now by this and Theorem \ref{Thm. case m is less than n+1}(ii), and noting that $n$ is even in this case, we obtain 
         \begin{equation*}
         	\left(\frac{\det B_p(n-2)}{p}\right)=\left(\frac{(n-2)!}{p}\right)
         	=\left(\frac{n!}{p}\right)\left(\frac{(n-1)n}{p}\right)
         	=\left(\frac{6}{p}\right),
         \end{equation*}
         where the last equality follows from $n(n-1)=3/4$ over $\mathbb{F}_p$. 
     
         Suppose now $p\equiv 3\pmod 4$. Then $n$ is odd. By Theorem \ref{Thm. case m is less than n+1}(ii) we clearly have 
         \begin{equation*}
         	\left(\frac{\det B_p(n-2)}{p}\right)=\left(\frac{-2}{p}\right).
         \end{equation*}
     
         In view of the above, we have completed the proof. \qed

        \section{Preparations for the proof of Theorem \ref{Thm. case m=n}}
        \setcounter{lemma}{0}
        \setcounter{theorem}{0}
        \setcounter{equation}{0}
        \setcounter{conjecture}{0}
        \setcounter{remark}{0}
        \setcounter{corollary}{0}
        \subsection{A lemma on almost circulant matrices}
        Let $n\ge2$ be an integer and let $\v=(a_0,a_1,\cdots,a_{n-1})\in\mathbb{C}^n$. The circulant matrix of $\v$ is an $n\times n$ matrix defined by 
        $$C_n(\v)=[a_{j-i}]_{0\le i,j\le n-1},$$
        where $a_s=a_t$ whenever $s\equiv t\pmod n$, that is, 
        $$C_n(\v)=\left[\begin{array}{ccccc}
        	a_0       &  a_1     & \cdots  & a_{n-2}  & a_{n-1} \\
        	a_{n-1}   &  a_0     & \cdots  & a_{n-3}  & a_{n-2} \\
        	\vdots    &  \vdots  & \ddots  & \vdots   & \vdots  \\
        	a_2       &  a_3     & \cdots  & a_0      & a_1     \\
        	a_1       &  a_2      & \cdots  & a_{n-1}  & a_0
        \end{array}\right].$$
        
        Recently, the first author and the second author \cite[Theorem 4.1]{Wu-Wang} defined the almost circulant matrix $W_n(\v)$ of $\v$ by 
        \begin{equation*}
        	W_n(\v)=\left[a_{j-i}\right]_{1\le i,j\le n-1},
        \end{equation*}
        and obtained the following result.
        
        \begin{lemma}\label{Lem. determinant formula on almost circulant matrices}
        	Let $\lambda_0,\lambda_1,\cdots,\lambda_{n-1}$ be all the eigenvalues of $C_n(\v)$. Then 
        	$$\det W_n(\v)=\frac{1}{n}\sum_{l=0}^{n-1}\prod_{k\in[0,n-1]\setminus\{l\}}\lambda_k.$$
        \end{lemma}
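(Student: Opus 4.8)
The plan is to realize $W_n(\v)$ as the principal submatrix of the circulant $C_n(\v)$ obtained by deleting its $0$-th row and $0$-th column, and then to combine the classical diagonalization of circulant matrices by the discrete Fourier transform with the multiplicativity of the adjugate. Concretely, $\det W_n(\v)$ is the $(0,0)$-minor of $C_n(\v)$, hence --- since the sign $(-1)^{0+0}$ is trivial --- it equals the $(0,0)$ entry of the adjugate matrix $\operatorname{adj} C_n(\v)$. So it suffices to identify that single entry.

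First I would recall the spectral picture. Let $\omega$ be a primitive $n$-th root of unity in $\mathbb{C}$ and let $V=[\omega^{ij}]_{0\le i,j\le n-1}$ be the (invertible) Fourier matrix, whose inverse has entries $(V^{-1})_{ij}=\tfrac1n\omega^{-ij}$. A direct computation gives $C_n(\v)V=V\Lambda$ with $\Lambda=\diag(\lambda_0,\dots,\lambda_{n-1})$ and $\lambda_k=\sum_{m=0}^{n-1}a_m\omega^{mk}$, so $C_n(\v)=V\Lambda V^{-1}$ and the multiset $\{\lambda_0,\dots,\lambda_{n-1}\}$ is exactly the spectrum of $C_n(\v)$. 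Since the right-hand side of the asserted identity is the symmetric function $\tfrac1n\sum_{l}\prod_{k\ne l}\lambda_k$ of the eigenvalues, it does not matter which labelling we use, and we may work with these explicit $\lambda_k$.

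Next I would use the identity $\operatorname{adj}(XY)=\operatorname{adj}(Y)\operatorname{adj}(X)$ together with $\operatorname{adj}(V)=(\det V)V^{-1}$ and $\operatorname{adj}(V^{-1})=(\det V)^{-1}V$ to obtain $\operatorname{adj} C_n(\v)=V\,(\operatorname{adj}\Lambda)\,V^{-1}$; because $\Lambda$ is diagonal, $\operatorname{adj}\Lambda=\diag(\mu_0,\dots,\mu_{n-1})$ with $\mu_l=\prod_{k\in[0,n-1]\setminus\{l\}}\lambda_k$. Finally I would read off the $(0,0)$ entry, using that the $0$-th row of $V$ is $(1,1,\dots,1)$ and the $0$-th column of $V^{-1}$ is $(\tfrac1n,\dots,\tfrac1n)^{\mathrm T}$:
$$\det W_n(\v)=\big(\operatorname{adj} C_n(\v)\big)_{00}=\sum_{l=0}^{n-1}V_{0l}\,\mu_l\,(V^{-1})_{l0}=\frac1n\sum_{l=0}^{n-1}\prod_{k\in[0,n-1]\setminus\{l\}}\lambda_k,$$
which is precisely the claimed formula.

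I do not anticipate a genuine obstacle: the argument is pure linear algebra, and the only points requiring attention are the bookkeeping of the normalizations of $V$ and $V^{-1}$ and the fact that nothing above uses invertibility of $C_n(\v)$ itself --- circulants are always diagonalizable by the fixed matrix $V$, and the adjugate identities are polynomial identities, so the singular case ($\lambda_k=0$ for some $k$) needs no separate treatment. Should one wish to avoid the adjugate, an equivalent route is to first treat the case where all $\lambda_k\ne0$ via $\det W_n(\v)=\det C_n(\v)\cdot\big(C_n(\v)^{-1}\big)_{00}=\prod_k\lambda_k\cdot\tfrac1n\sum_k\lambda_k^{-1}$, and then extend to all $\v$ by noting that both sides are polynomials in $a_0,\dots,a_{n-1}$ that agree on a Zariski-dense subset of $\mathbb{C}^n$.
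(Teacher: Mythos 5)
Your argument is correct and complete. Note that the paper does not prove this lemma at all: it is quoted from \cite[Theorem 4.1]{Wu-Wang}, so there is no in-paper proof to compare against; your adjugate route is a sound self-contained justification. The three key identifications are all right — $\det W_n(\v)$ is the $(0,0)$ entry of $\operatorname{adj}C_n(\v)$ since $W_n(\v)$ is the principal submatrix of $C_n(\v)$ with row and column $0$ deleted and the cofactor sign is $+1$; the Fourier diagonalization $C_n(\v)=V\Lambda V^{-1}$ with $V_{0l}=1$ and $(V^{-1})_{l0}=\tfrac1n$ gives $\operatorname{adj}C_n(\v)=V(\operatorname{adj}\Lambda)V^{-1}$ by multiplicativity of the adjugate; and $\operatorname{adj}\Lambda=\diag\bigl(\prod_{k\neq l}\lambda_k\bigr)_l$ — and your two closing remarks (the right-hand side is symmetric in the eigenvalues, so the labelling in the statement is immaterial, and nothing requires $C_n(\v)$ to be invertible, so the singular case needs no separate treatment) dispose of the only points where such a proof could silently go wrong.
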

        
        \subsection{Some $p$-adic preparations}
        Let $\pi\in\mathbb{C}_p$ with $\pi^{p-1}+p=0$ and let $\zeta_{\pi}\in\mathbb{C}_p$ be a primitive $p$-th root of unity with $\zeta_{\pi}\equiv 1+\pi\pmod{\pi^2}$. 
        
         Recall that $q=p^f$. Let $\zeta_{q-1}\in\mathbb{C}_p$ be a primitive $(q-1)$-th root of unity. Then it is known that $\mathbb{Q}_p(\zeta_{q-1})/\mathbb{Q}_p$ is an unramified extension and $[\mathbb{Q}_p(\zeta_{q-1}):\mathbb{Q}_p]=f$. Hence 
        $$\mathbb{Z}_p[\zeta_{q-1}]/\mathfrak{p}\cong\mathbb{F}_q,$$
        where $\mathfrak{p}=p\mathbb{Z}_p[\zeta_{q-1}]$ is a prime ideal of $\mathbb{Z}[\zeta_{q-1}]$. From now on, we identify $\mathbb{F}_q$ with $\mathbb{Z}_p[\zeta_{q-1}]/\mathfrak{p}$. The Teich\"{u}muller character $\omega_q: \mathbb{F}_q\rightarrow \mathbb{C}_p$ is a multiplicative character of $\mathbb{F}_q$ defined by 
        \begin{equation}\label{Eq. definition of the T character}
        	\omega_q(x\mod \mathfrak{p})\equiv x\pmod{\mathfrak{p}}
        \end{equation}
        for any $x\in \mathbb{Z}_p[\zeta_{q-1}]$. Also, it is easy to verify that $\omega_q$ is a generator of $\widehat{\mathbb{F}_q^{\times}}$. For any integer $r\in[0,q-2]$, letting 
        $$r=r_0\cdot 1+r_1\cdot p+\cdots+r_{f-1}\cdot p^{f-1} $$
        be the decomposition of $r$ in the base $p$, we define 
        \begin{equation}\label{Eq. definition of s(r)}
        	s(r)=\sum_{i=0}^{f-1}r_i.
        \end{equation}

        For $s(r)$, we have the following known result (see \cite[Lemma 3.6.7]{Cohen}).
        
        \begin{lemma}\label{Lem. evalutation of s(r)}
        	For any $r\in[0,q-2]$ we have 
        	\begin{equation*}
        		s(r)=(p-1)\sum_{i=0}^{f-1}\left\{\frac{rp^i}{q-1}\right\},
        	\end{equation*}
        where $\{x\}$ denotes the fractional part of a real number $x$. 
        \end{lemma}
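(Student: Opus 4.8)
The plan is to exploit the elementary fact that multiplication by $p$ permutes the base-$p$ digits of $r$ cyclically once one reduces modulo $q-1=p^f-1$. First, if $r=0$ then both sides of the asserted identity vanish, so we may assume $r\in[1,q-2]$. Since $\gcd(p,q-1)=1$ and $r\not\equiv 0\pmod{q-1}$, for every $i\in[0,f-1]$ the least nonnegative residue $rp^i\bmod(q-1)$ lies in $[1,q-2]$; hence $\{rp^i/(q-1)\}=(rp^i\bmod(q-1))/(q-1)$, and it suffices to evaluate $\sum_{i=0}^{f-1}\bigl(rp^i\bmod(q-1)\bigr)$.

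Next I would establish the cyclic-shift description. Writing $r=r_0+r_1p+\cdots+r_{f-1}p^{f-1}$ with each $r_j\in[0,p-1]$, one has $rp\equiv r_{f-1}+r_0p+r_1p^2+\cdots+r_{f-2}p^{f-1}\pmod{q-1}$ because $p^f\equiv 1\pmod{q-1}$; iterating this, $rp^i\bmod(q-1)$ is the unique integer in $[0,q-2]$ whose base-$p$ digit string is the $i$-fold cyclic shift of $(r_0,r_1,\dots,r_{f-1})$ — the key point being that every cyclic shift again has all digits in $[0,p-1]$ and therefore represents an integer in the interval $[0,q-1)$. Consequently, as $i$ runs over $[0,f-1]$, each digit $r_j$ is placed into each of the positions $p^0,p^1,\dots,p^{f-1}$ exactly once.

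It remains to sum. By the previous step,
$$\sum_{i=0}^{f-1}\bigl(rp^i\bmod(q-1)\bigr)=\sum_{j=0}^{f-1}r_j\sum_{t=0}^{f-1}p^t=s(r)\cdot\frac{p^f-1}{p-1}=s(r)\cdot\frac{q-1}{p-1}.$$
Dividing by $q-1$ and using $\{rp^i/(q-1)\}=(rp^i\bmod(q-1))/(q-1)$ yields
$$\sum_{i=0}^{f-1}\left\{\frac{rp^i}{q-1}\right\}=\frac{s(r)}{p-1},$$
which is exactly the claimed formula after multiplying through by $p-1$.

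The only genuinely delicate point is the cyclic-shift bookkeeping in the second step — specifically the verification that the $i$-fold cyclic shift of the digit string really is the least nonnegative residue of $rp^i$ modulo $q-1$ (rather than that residue plus a multiple of $q-1$), which holds precisely because a cyclically permuted string of base-$p$ digits still represents a number strictly below $p^f$. Everything else is a routine reindexing of a finite sum.
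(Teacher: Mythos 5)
Your proof is correct. Note that the paper does not actually prove this lemma at all: it is quoted as a known result with a citation to Cohen's book (Lemma 3.6.7 there), so there is no in-paper argument to compare against. Your digit-cycling argument is the standard proof of that cited fact, and you handle the one delicate point properly: for $r\in[1,q-2]$ the digits of $r$ are neither all $0$ nor all $p-1$, so each cyclic shift of the digit string gives an integer in $[1,q-2]$, which (being congruent to $rp^i$ modulo $q-1$, with $rp^i\not\equiv 0$ since $\gcd(p,q-1)=1$) must be the least nonnegative residue of $rp^i$; summing the shifts then gives $\sum_{i=0}^{f-1}\bigl(rp^i\bmod(q-1)\bigr)=s(r)\cdot\frac{q-1}{p-1}$, which is exactly the claimed identity. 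The trivial case $r=0$ is also covered, so the argument is complete and self-contained, which is arguably a small improvement over the paper's reliance on an external reference.
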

        
        Recall that $n=(q-1)/2=(p^f-1)/2$. Using Lemma \ref{Lem. evalutation of s(r)}, we obtain the following result.
        
        \begin{lemma}\label{Lem. s(n)+s(n+r) is greater than s(r)}
        	Suppose $r\in [0,n-1]$. Then 
        	\begin{equation*}
        		s(n)+s(n+r)>s(r).
        	\end{equation*}
        \end{lemma}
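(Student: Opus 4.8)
The plan is to apply Lemma~\ref{Lem. evalutation of s(r)} to each of $s(n)$, $s(n+r)$ and $s(r)$ and then compare the resulting sums term by term. This is legitimate: since $r\le n-1$ we have $n\le n+r\le 2n-1=q-2$, so all three of $n$, $n+r$, $r$ lie in the range $[0,q-2]$ to which Lemma~\ref{Lem. evalutation of s(r)} applies. Writing $N=q-1$ for brevity, Lemma~\ref{Lem. evalutation of s(r)} then yields
\begin{equation*}
	s(n)+s(n+r)-s(r)=(p-1)\sum_{i=0}^{f-1}T_i,\qquad T_i:=\left\{\frac{np^i}{N}\right\}+\left\{\frac{(n+r)p^i}{N}\right\}-\left\{\frac{rp^i}{N}\right\}.
\end{equation*}
So it suffices to prove that every $T_i\ge 0$ and that at least one $T_i$ is strictly positive, and I would take $i=0$ for the latter.

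The key observation is that the ``diagonal'' contributions are all equal to $\tfrac12$: since $n=N/2$ with $N$ even and $p^i$ odd, we have $np^i=\tfrac N2(p^i-1)+\tfrac N2\equiv \tfrac N2\pmod N$, hence $\{np^i/N\}=1/2$ for every $i\ge 0$. For the general nonnegativity of $T_i$ I would invoke the elementary identity $\{a\}+\{b\}-\{a+b\}=\lfloor a+b\rfloor-\lfloor a\rfloor-\lfloor b\rfloor\in\{0,1\}$ with $a=np^i/N$ and $b=rp^i/N$; this gives $\{(n+r)p^i/N\}=\{np^i/N\}+\{rp^i/N\}-\delta_i$ for some $\delta_i\in\{0,1\}$, and therefore $T_i=2\{np^i/N\}-\delta_i=1-\delta_i\ge 0$.

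Finally I would handle the term $i=0$ by direct evaluation. Since $0\le r\le n-1$ we have $0\le r<N/2<N$, so $\{r/N\}=r/N$; and since $n\le n+r\le q-2<N$ we have $\{(n+r)/N\}=(n+r)/N=\tfrac12+\tfrac rN$. Hence $T_0=\tfrac12+\bigl(\tfrac12+\tfrac rN\bigr)-\tfrac rN=1>0$. Combining the three observations,
\begin{equation*}
	s(n)+s(n+r)-s(r)=(p-1)\sum_{i=0}^{f-1}T_i\ge (p-1)T_0=p-1>0,
\end{equation*}
which is the assertion. I do not expect any genuine obstacle here; the only points that need a little care are verifying that $n+r\le q-2$ (so that Lemma~\ref{Lem. evalutation of s(r)} is applicable to $s(n+r)$) and the bookkeeping with fractional parts, the crucial input being that $\{np^i/N\}=\tfrac12$ for all $i$.
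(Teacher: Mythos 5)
Your proof is correct and takes essentially the same route as the paper's: both evaluate $s(n)$, $s(n+r)$, $s(r)$ via Lemma \ref{Lem. evalutation of s(r)}, use that $\{np^i/(q-1)\}=1/2$ because $p^i$ is odd, and then argue termwise that each contribution to $s(n)+s(n+r)-s(r)$ is nonnegative with the $i=0$ term strictly positive. Your $\delta_i\in\{0,1\}$ bookkeeping is just a repackaging of the paper's case distinction in Eq. (\ref{Eq. a in the proof of Lem. s(n)+s(n+r) is greater than s(r)}), so there is nothing to add.
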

        
        \begin{proof}
        	For any $i\in[0,f-1]$, we set 
        	$$x_i(r):=\left\{\frac{rp^i}{q-1}\right\}.$$      
        	Then it is easy to verify that 
        	\begin{equation}\label{Eq. a in the proof of Lem. s(n)+s(n+r) is greater than s(r)}
        		\frac{1}{2}+\left\{\frac{1}{2}+x_i(r)\right\}=\begin{cases}
        			x_i(r)+1 & \mbox{if}\ 0\le x_i(r)<1/2,\\
        			x_i(r)   & \mbox{if}\ 1/2\le x_i(r)<1.
        		     \end{cases}
        	\end{equation}
        	As $0\le r\le n-1$, we have $x_0(r)<1/2$. Hence by Eq. (\ref{Eq. a in the proof of Lem. s(n)+s(n+r) is greater than s(r)}) we obtain 
        	\begin{equation*}
        		\frac{1}{2}+\left\{\frac{1}{2}+x_0(r)\right\}>x_0(r).
        	\end{equation*}
        	By this, Lemma \ref{Lem. evalutation of s(r)} and Eq. (\ref{Eq. a in the proof of Lem. s(n)+s(n+r) is greater than s(r)}), one can verify that 
        	\begin{align*}
        		s(n)+s(n+r)
        		&=(p-1)\sum_{i=0}^{f-1}\left(\frac{1}{2}+\left\{\frac{(n+r)p^i}{q-1}\right\}\right)\\
        		&=(p-1)\sum_{i=0}^{f-1}\left(\frac{1}{2}+\left\{\frac{p^i}{2}+\frac{rp^i}{q-1}\right\}\right)\\
        		&=(p-1)\sum_{i=0}^{f-1}\left(\frac{1}{2}+\left\{\frac{p^i-1}{2}+\frac{1}{2}+\frac{rp^i}{q-1}\right\}\right)\\
        		&=(p-1)\sum_{i=0}^{f-1}\left(\frac{1}{2}+\left\{\frac{1}{2}+x_i(r)\right\}\right)\\
        		&>(p-1)\sum_{i=0}^{f-1}x_i(r)\\
        		&=s(r).
        	\end{align*}
        	This completes the proof.	 
        \end{proof}
        
        Recall that $p$ is an odd prime. We next introduce the $p$-adic Gamma function $\Gamma_p: \mathbb{Z}_p\rightarrow \mathbb{Z}_p^{\times}$, where $\mathbb{Z}_p^{\times}$ denotes the group of all $p$-adic units. For any $n\in\mathbb{Z}^+$, we define 
        $$\Gamma_p(n)=(-1)^n\prod_{k\in[1,n-1]\cap \mathbb{Z}_p^{\times}}k.$$
        Since $\mathbb{Z}^+$ is dense in $\mathbb{Z}_p$ and $\mathbb{Z}_p^{\times}$ is a closed multiplicative group, the $p$-adic Gamma function 
        $\Gamma_p: \mathbb{Z}_p \rightarrow \mathbb{Z}_p^{\times}$
        is defined by 
        $$\Gamma_p(x)=\lim_{i\rightarrow \infty}\Gamma_p(x_i),$$
         where $\{x_i\}_{i=1}^{\infty}$ is a sequence of positive integers $x_i$ with $\lim_{i\rightarrow \infty}x_i=x$.
        
        In this paper, we need following result on $\Gamma_p$. 
        
        \begin{lemma}\label{Lem. basic properties of p adic gamma}
        	Suppose that $p\ge5$ is a prime. Let $n\in\mathbb{Z}^+$. Then for any $x,y\in\mathbb{Z}_p$ we have 
        	$$x\equiv y\pmod{p^n\mathbb{Z}_p}\Rightarrow \Gamma_p(x) \equiv \Gamma_p(y) \pmod{p^n\mathbb{Z}_p}.$$

        \end{lemma}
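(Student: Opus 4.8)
The plan is to reduce the statement to the case of positive integers and then pass to the limit via the definition of $\Gamma_p$. First I would note that it suffices to prove the congruence when $x$ and $y$ are positive integers. Indeed, given $x,y\in\mathbb{Z}_p$ with $x\equiv y\pmod{p^n\mathbb{Z}_p}$, for each $i\ge1$ choose positive integers $a_i,b_i$ with $a_i\equiv x\pmod{p^{n+i}\mathbb{Z}_p}$ and $b_i\equiv y\pmod{p^{n+i}\mathbb{Z}_p}$, which is possible since $\mathbb{Z}^+$ is dense in $\mathbb{Z}_p$. Then $a_i\equiv b_i\pmod{p^n\mathbb{Z}_p}$ for all $i\ge1$, while $\Gamma_p(a_i)\to\Gamma_p(x)$ and $\Gamma_p(b_i)\to\Gamma_p(y)$ by the definition of $\Gamma_p$. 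Since $p^n\mathbb{Z}_p$ is closed, the integer case would yield $\Gamma_p(x)-\Gamma_p(y)=\lim_{i\to\infty}\bigl(\Gamma_p(a_i)-\Gamma_p(b_i)\bigr)\in p^n\mathbb{Z}_p$.

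So the heart of the matter is the following: for positive integers $a<b$ with $b-a=p^n t$ for some $t\in\mathbb{Z}^+$, one has $\Gamma_p(b)\equiv\Gamma_p(a)\pmod{p^n\mathbb{Z}_p}$. Since $\Gamma_p(a),\Gamma_p(b)\in\mathbb{Z}_p^{\times}$, this is equivalent to $\Gamma_p(b)/\Gamma_p(a)\equiv1\pmod{p^n\mathbb{Z}_p}$, and straight from the definition
$$\frac{\Gamma_p(b)}{\Gamma_p(a)}=(-1)^{b-a}\prod_{\substack{a\le k<b\\ p\nmid k}}k.$$
The interval $[a,b)$ splits into $t$ consecutive blocks of $p^n$ integers each, and within each block the integers $k$ with $p\nmid k$ form a complete set of representatives of $(\mathbb{Z}/p^n\mathbb{Z})^{\times}$. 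Hence, modulo $p^n\mathbb{Z}_p$, the product $\prod_{a\le k<b,\,p\nmid k}k$ is congruent to $\Bigl(\prod_{u\in(\mathbb{Z}/p^n\mathbb{Z})^{\times}}u\Bigr)^{t}$.

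Next I would invoke the generalized Wilson theorem: for an odd prime $p$ and any $n\ge1$, the product of all units modulo $p^n$ is $\equiv-1\pmod{p^n\mathbb{Z}}$. Thus the displayed product is $\equiv(-1)^{t}\pmod{p^n\mathbb{Z}_p}$. On the other hand $(-1)^{b-a}=(-1)^{p^n t}=(-1)^{t}$, because $p^n$ is odd. Multiplying the two gives $\Gamma_p(b)/\Gamma_p(a)\equiv(-1)^{t}(-1)^{t}=1\pmod{p^n\mathbb{Z}_p}$, as required. I would also remark that this same integer computation shows that the limit defining $\Gamma_p$ exists and is independent of the chosen approximating sequence of positive integers, so that the reduction in the first paragraph is legitimate.

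The only mildly delicate step is the bookkeeping in the grouping argument — checking that inside each block of $p^n$ consecutive integers the ones prime to $p$ run exactly once through $(\mathbb{Z}/p^n\mathbb{Z})^{\times}$ — together with citing the correct prime-power form of Wilson's theorem; everything else is formal, and I expect no real obstacle.
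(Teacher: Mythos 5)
Your argument is correct, and it is worth noting that the paper itself offers no proof of this lemma: it is quoted as a known property of the $p$-adic Gamma function (it is the standard Lipschitz/continuity estimate, found e.g.\ in Cohen's book), so your write-up supplies a self-contained derivation rather than paralleling anything in the text. The two key steps both check out: the ratio $\Gamma_p(b)/\Gamma_p(a)=(-1)^{b-a}\prod_{a\le k<b,\ p\nmid k}k$ is handled by splitting $[a,b)$ into $t$ blocks of $p^n$ consecutive integers, each a complete residue system modulo $p^n$, so the product of the prime-to-$p$ terms is $\bigl(\prod_{u\in(\mathbb{Z}/p^n\mathbb{Z})^\times}u\bigr)^t\equiv(-1)^t$ by the prime-power Wilson theorem, and the sign $(-1)^{p^nt}=(-1)^t$ cancels this; since $\Gamma_p(a)\in\mathbb{Z}_p^\times$, the congruence for positive integers follows, and your limiting argument (using that $p^n\mathbb{Z}_p$ is closed, and observing that the same integer congruence is exactly the Cauchy criterion that makes the paper's definition of $\Gamma_p$ on $\mathbb{Z}_p$ well posed) is the right way to pass to general $x,y\in\mathbb{Z}_p$. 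One small observation: your proof never uses $p\ge5$ -- it only needs $p$ odd, so that $(\mathbb{Z}/p^n\mathbb{Z})^\times$ is cyclic and the unit product is $-1$ and $(-1)^{p^n}=-1$; the hypothesis $p\ge5$ in the lemma is simply inherited from the paper's setting ($q\ge7$), and the only genuine failure of the statement is at $p=2$ (already modulo $4$, where the unit product is $-1$ but $(-1)^{4t}=1$). So the proposal is a complete and slightly more general proof of the quoted fact.
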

        
        We conclude this section with the following result, which is known as the Gross-Koblitz formula \cite[Theorem 1.7]{Gross-Koblitz}.
        
        \begin{lemma}\label{Lem. the Gross-Koblitz formula}
        Let notations be as above. For any $r\in[0,q-2]$, let the Gauss sum
        	$$G_q(\omega_q^{-r})=\sum_{x\in\mathbb{F}_q}\omega_q^{-r}(x)\zeta_{\pi}^{\Tr_{\mathbb{F}_q/\mathbb{F}_p}(x)}.$$
        	Then 
        	\begin{equation*}
        		G_q(\omega_q^{-r})=-\pi^{s(r)}\cdot\prod_{i=0}^{f-1}\Gamma_p\left(\left\{\frac{rp^i}{q-1}\right\}\right),
        	\end{equation*}
        where $\Gamma_p$ is the $p$-adic Gamma function and $\{x\}$ is the fractional part of a real number $x$. 
        \end{lemma}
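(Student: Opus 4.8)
The statement is the Gross--Koblitz formula, a deep theorem of $p$-adic analysis; rather than a full argument I will only outline the strategy, which is Dwork's. The plan is to rewrite the Gauss sum $G_q(\omega_q^{-r})$ entirely in terms of a single $p$-adic analytic function --- Dwork's \emph{splitting function} --- and then to resum the resulting series into a product of values of $\Gamma_p$.

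First I would introduce the splitting function. Let $E(X)=\exp\bigl(\sum_{j\ge0}X^{p^j}/p^j\bigr)$ be the Artin--Hasse exponential and set $\theta(X)=E(\pi X)$. A classical computation using $\pi^{p-1}=-p$ shows that $\theta$ converges on a disc of radius exceeding $1$, that $\theta(1)$ is a primitive $p$-th root of unity, and that with the normalization $\zeta_\pi\equiv1+\pi\pmod{\pi^2}$ one has $\theta(1)=\zeta_\pi$. Writing $\theta(X)=\sum_{m\ge0}c_mX^m$, one has $\ord_p(c_m)\ge m/(p-1)$ together with Dwork's finer congruences on the $c_m$. Dwork's lemma then expresses the additive character through $\theta$: for $x\in\mathbb{F}_q$ with Teichm\"{u}ller lift $\widehat{x}=\omega_q(x)$,
$$\zeta_\pi^{\Tr_{\mathbb{F}_q/\mathbb{F}_p}(x)}=\prod_{i=0}^{f-1}\theta\bigl(\widehat{x}^{\,p^i}\bigr).$$
Substituting this into the definition of the Gauss sum, replacing the sum over $\mathbb{F}_q^\times$ by a sum over the $(q-1)$-th roots of unity $t$, and expanding each factor $\theta$, I would obtain
$$G_q(\omega_q^{-r})=\sum_{t^{q-1}=1}t^{-r}\sum_{m_0,\dots,m_{f-1}\ge0}\Bigl(\prod_{i=0}^{f-1}c_{m_i}\Bigr)t^{m_0+m_1p+\dots+m_{f-1}p^{f-1}}.$$
Since $\sum_{t^{q-1}=1}t^{k}$ equals $q-1$ when $(q-1)\mid k$ and vanishes otherwise, only the tuples with $\sum_i m_ip^i\equiv r\pmod{q-1}$ survive, and the Gauss sum becomes $(q-1)$ times a constrained $\pi$-adic sum of the products $\prod_i c_{m_i}$.

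It remains to evaluate this constrained sum in closed form, and this is the real content of the theorem. One first checks that its minimal $p$-adic valuation is $s(r)/(p-1)=\ord_p(\pi^{s(r)})$, attained at the tuple of base-$p$ digits of $r$; this recovers Stickelberger's congruence and already explains the factor $\pi^{s(r)}$. The precise evaluation then proceeds by a careful resummation that combines Dwork's congruences for the coefficients $c_m$ with the analytic theory of $\Gamma_p$ --- its Mahler expansion, the continuity recorded in Lemma~\ref{Lem. basic properties of p adic gamma}, and the reflection and multiplication formulas --- to collapse the whole series to
$$-\pi^{s(r)}\prod_{i=0}^{f-1}\Gamma_p\Bigl(\Bigl\{\frac{rp^i}{q-1}\Bigr\}\Bigr),$$
with the sign $-1$ forced by the case $r=0$, where $G_q(\varepsilon)=-1$, and the leading-term comparison. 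I expect this last resummation to be the main obstacle: extracting the closed form genuinely requires Dwork's formal-group machinery and is not accessible by elementary manipulation. Since the argument is carried out in full in \cite[Theorem~1.7]{Gross-Koblitz}, in the body of the paper we simply invoke that reference.
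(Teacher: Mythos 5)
The paper offers no proof of this lemma: it is quoted directly from Gross--Koblitz \cite[Theorem 1.7]{Gross-Koblitz}, which is exactly what you do after your outline. Your sketch of the standard Dwork splitting-function route (Artin--Hasse exponential, $\theta(1)=\zeta_\pi$, Dwork's lemma for the additive character, orthogonality over the $(q-1)$-th roots of unity, the Stickelberger valuation $s(r)/(p-1)$, and the final resummation via properties of $\Gamma_p$) is accurate, so your treatment is essentially the same as the paper's.
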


        \section{Proof of Theorem \ref{Thm. case m=n}}
        \setcounter{lemma}{0}
        \setcounter{theorem}{0}
        \setcounter{equation}{0}
        \setcounter{conjecture}{0}
        \setcounter{remark}{0}
        \setcounter{corollary}{0}
        
        Let notations be as in Section 3. In this section, we fix a generator $g$ of the cyclic group $\mathbb{F}_q^{\times}$. Recall that $n=(q-1)/2$. For any $i\in[0,n-1]$, we let 
        $$a_i=\omega_q^{-n}\left(1+g^{2i}\right),$$
        and let 
        $$\v=\left(a_0,a_1,\cdots,a_{n-1}\right).$$
        
        We begin with the following result.
        
        \begin{lemma}\label{Lem. eigenvalues of Cn(v)}
        	For any $r\in[0,n-1]$, let 
        	\begin{equation}\label{Eq. eigenvalues of Cn(v)}
        		\lambda_r:=\frac{(-1)^r}{2}J_q\left(\omega_q^{-n},\omega_q^{-r}\right)+\frac{(-1)^{n+r}}{2}J_q\left(\omega_q^{-n},\omega_q^{-(n+r)}\right).
        	\end{equation}
        Then $\lambda_0=-1,\lambda_1,\cdots,\lambda_{n-1}$ are exactly all the eigenvalues of the circulant matrix $C_n(\v)$. 
        \end{lemma}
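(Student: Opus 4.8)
The plan is to diagonalize the circulant matrix $C_n(\v)$ explicitly and then identify the resulting eigenvalues with the Jacobi-sum expressions in $(\ref{Eq. eigenvalues of Cn(v)})$. Recall the classical fact that if $\zeta$ is a fixed primitive $n$-th root of unity in $\mathbb{C}_p$ (which contains all roots of unity), then the vectors $\u_r=(1,\zeta^r,\zeta^{2r},\dots,\zeta^{(n-1)r})^{\top}$, $r\in[0,n-1]$, form a basis of $\mathbb{C}_p^{\,n}$ consisting of eigenvectors of $C_n(\v)=[a_{j-i}]_{0\le i,j\le n-1}$, with $C_n(\v)\u_r=\mu_r\u_r$ and $\mu_r=\sum_{k=0}^{n-1}a_k\zeta^{kr}$; hence the multiset $\{\mu_0,\dots,\mu_{n-1}\}$ is precisely the multiset of eigenvalues of $C_n(\v)$. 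The decisive choice is to take $\zeta:=\omega_q(g)^{-2}$: since $g^2$ generates the group $S=\{s_1,\dots,s_n\}$ of nonzero squares, $\omega_q(g^2)$ has order $n$, so $\zeta$ is a primitive $n$-th root of unity, and moreover $\zeta^{kr}=\omega_q^{-r}(g^{2k})$. As $k$ runs over $[0,n-1]$ the element $g^{2k}$ runs over $S$, so $\mu_r=\sum_{s\in S}\omega_q^{-n}(1+s)\,\omega_q^{-r}(s)$.

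Next I would convert this sum over $S$ into a full character sum over $\mathbb{F}_q$ and recognize the two resulting pieces as Jacobi sums. Writing $\phi:=\omega_q^{-n}$, which is the quadratic character of $\mathbb{F}_q$ since $\omega_q$ generates $\widehat{\mathbb{F}_q^{\times}}$ and $q-1=2n$, the indicator of $S$ in $\mathbb{F}_q^{\times}$ is $(1+\phi(x))/2$; because every character (including the trivial one) is extended by $0$ at $0$, we may sum over all of $\mathbb{F}_q$ and obtain $\mu_r=\tfrac12\sum_{x}\omega_q^{-n}(1+x)\omega_q^{-r}(x)+\tfrac12\sum_{x}\omega_q^{-n}(1+x)\omega_q^{-(n+r)}(x)$, where in the second sum I used $\phi(x)\omega_q^{-r}(x)=\omega_q^{-(n+r)}(x)$. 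For any integer $t$, the substitution $x\mapsto -x$ together with $\omega_q(-1)=-1$ and the symmetry $J_q(\alpha,\beta)=J_q(\beta,\alpha)$ gives $\sum_{x}\omega_q^{-n}(1+x)\omega_q^{-t}(x)=\omega_q^{-t}(-1)J_q(\omega_q^{-t},\omega_q^{-n})=(-1)^{t}J_q(\omega_q^{-n},\omega_q^{-t})$. Applying this with $t=r$ and $t=n+r$ turns $\mu_r$ into exactly the right-hand side of $(\ref{Eq. eigenvalues of Cn(v)})$, so $\mu_r=\lambda_r$ for every $r$.

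It then remains to evaluate $\lambda_0$ directly from the formula. Here $\lambda_0=\tfrac12 J_q(\phi,\varepsilon)+\tfrac{(-1)^n}{2}J_q(\phi,\phi)$. One has $J_q(\phi,\varepsilon)=\sum_{x\ne 0,1}\phi(x)=-\phi(1)=-1$, while the standard evaluation $J_q(\phi,\phi)=J_q(\phi,\phi^{-1})=-\phi(-1)$ combined with $\phi(-1)=(-1)^{(q-1)/2}=(-1)^{n}$ yields $\lambda_0=-\tfrac12-\tfrac12=-1$, as claimed. This completes the identification of the eigenvalues.

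I do not expect a genuine obstacle: the argument is essentially careful bookkeeping around (a) choosing the primitive $n$-th root of unity as $\omega_q(g)^{-2}$ so that the circulant twist $\zeta^{kr}$ becomes the character value $\omega_q^{-r}(g^{2k})$ with the correct sign on the exponent, (b) the conventions $\psi(0)=0$ and $\omega_q(-1)=-1$, and (c) the two classical Jacobi-sum identities $J_q(\phi,\varepsilon)=-1$ and $J_q(\phi,\phi)=-\phi(-1)$. The only point worth an explicit remark is that the circulant diagonalization is legitimate because $\mathbb{C}_p$, the field in which the entries $a_i$ live, contains a primitive $n$-th root of unity, so the relevant Vandermonde matrix is invertible.
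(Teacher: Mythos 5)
Your proposal is correct and follows essentially the same route as the paper: the eigenvectors you obtain from the circulant diagonalization with $\zeta=\omega_q(g)^{-2}$ are exactly the character-valued vectors $\bigl(\omega_q^{-r}(g^{2k})\bigr)_k$ the paper uses, and you then pass to Jacobi sums via the same indicator $\tfrac12(\varepsilon+\omega_q^{-n})$ of the squares and evaluate $\lambda_0$ by the same standard identities $J_q(\phi,\varepsilon)=-1$ and $J_q(\phi,\phi)=-\phi(-1)$. No gaps; the bookkeeping with $\omega_q(-1)=-1$ and the convention $\psi(0)=0$ is handled correctly.
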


        \begin{proof}
        	For any $r\in[0,n-1]$, one can verify that 
        	\begin{align*}
        		&\sum_{0\le j\le n-1}\omega_q^{-n}\left(1+g^{2j-2i}\right)\omega_q^{-r}\left(g^{2j}\right)\\
        	   =&\sum_{0\le j\le n-1}\omega_q^{-n}\left(1+g^{2j-2i}\right)\omega_q^{-r}\left(g^{2j-2i}\right)\omega_q^{-r}\left(g^{2i}\right)\\
        	   =&\sum_{0\le j\le n-1}\omega_q^{-n}\left(1+g^{2j}\right)\omega_q^{-r}\left(g^{2j}\right)\omega_q^{-r}\left(g^{2i}\right).
        	\end{align*}
            This implies that for any $r\in[0,n-1]$, 
            \begin{equation*}
            	C_n(\v)\u_r=y_r\u_r,
            \end{equation*}
            where the column vector 
            $$\u_r=\left(\omega_q^{-r}(g^{0}),\omega_q^{-r}(g^2),\cdots,\omega_q^{-r}(g^{2(n-1)})\right)^T,$$
            and 
            $$y_r=\sum_{0\le j\le n-1}\omega_q^{-n}\left(1+g^{2j}\right)\omega_q^{-r}\left(g^{2j}\right).$$
         As $\u_0,\u_1,\cdots,\u_{n-1}$ are clearly linearly independent over $\mathbb{C}$, the numbers $y_0,y_1,\cdots,y_{n-1}$ are all the eigenvalues of $C_n(\v)$. Note that 
         $$\frac{1}{2}\left(\varepsilon(x)+\omega_q^{-n}(x)\right)=\begin{cases}
         	1 & \mbox{if}\ x\in\{s_1,s_2,\cdots,s_n\},\\
         	0 & \mbox{otherwise}.
         \end{cases}$$
         Thus, for each $r\in[0,n-1]$, we have 
         \begin{align*}
         	y_r&=\sum_{0\le j\le n-1}\omega_q^{-n}\left(1+g^{2j}\right)\omega_q^{-r}\left(g^{2j}\right)\\
         	   &=\sum_{1\le j\le n}\omega_q^{-n}\left(1+s_j\right)\omega_q^{-r}\left(s_j\right)\\
         	   &=\frac{1}{2}\sum_{x\in\mathbb{F}_q}\left(\varepsilon(x)+\omega_q^{-n}(x)\right)\omega_q^{-n}\left(1+x\right)\omega_q^{-r}\left(x\right)\\
         	   &=\frac{(-1)^r}{2}\sum_{x\in\mathbb{F}_q}\omega_q^{-n}\left(1+x\right)\omega_q^{-r}\left(-x\right)+\frac{(-1)^{n+r}}{2}\sum_{x\in\mathbb{F}_q}\omega_q^{-n}\left(1+x\right)\omega_q^{-(n+r)}\left(-x\right)\\
         	   &=\lambda_r.
         \end{align*}
        
        For $\lambda_0$, one can verify that 
        \begin{align*}
        	\lambda_0&=\frac{1}{2}J_q\left(\omega_q^{-n},\varepsilon\right)+\frac{(-1)^{n}}{2}J_q\left(\omega_q^{-n},\omega_q^{{-n}}\right)\\
        	&=\frac{1}{2}J_q\left(\omega_q^{-n},\varepsilon\right)+\frac{(-1)^{n}}{2}J_q\left(\omega_q^{-n},\omega_q^{{n}}\right)\\
        	&=-\frac{1}{2}\omega_q(1)+\frac{(-1)^{n}}{2}(-\omega_q^{-n}(-1))\\
        	&=-1.
        \end{align*}
        
        In view of the above, we have completed the proof.
        \end{proof}
        
        Now we are in a position to prove Theorem \ref{Thm. case m=n}.
        
        {\noindent{\bf Proof of Theorem \ref{Thm. case m=n}}.} Recall that $g$ is a generator of $\mathbb{F}_q^{\times}$. Then it is clear that 
        \begin{equation*}
        	\det B_q(n)=\det\left[\left(1+\frac{s_j}{s_i}\right)^n\right]_{2\le i,j\le n}=\det\left[\left(1+g^{2j-2i}\right)^n\right]_{1\le i,j\le n-1}.
        \end{equation*}
        By the definition (\ref{Eq. definition of the T character}) of the Teich\"{u}muller character $\omega_q$ of $\mathbb{F}_q$, we see that 
        \begin{equation}\label{Eq. A in the proof of Thm. case m=n}
        	\det B_q(n)=\det \left[\omega_q^{-n}\left(1+g^{2j-2i}\right)\right]_{1\le i,j\le n-1}\mod{\mathfrak{p}}=\det W_n(\v) \mod{\mathfrak{p}},
        \end{equation}
        where 
        $$\v=\left(\omega_q^{-n}\left(1+g^{0}\right),\omega_q^{-n}\left(1+g^{2}\right),\cdots,\omega_q^{-n}\left(1+g^{2(n-1)}\right)\right),$$
        and $\mathfrak{p}=p\mathbb{Z}_p[\zeta_{q-1}]$ is a prime ideal. Hence we focus on the determinant of the almost circulant matrix $W_n(\v)$. By Lemma \ref{Lem. eigenvalues of Cn(v)} and Lemma \ref{Lem. determinant formula on almost circulant matrices} we have 
        \begin{equation}\label{Eq. B in the proof of Thm. case m=n}
        	\det B_q(n)=\det W_n(\v) \mod {\mathfrak{p}}=\frac{1}{n}\sum_{l=0}^{n-1}\prod_{r\in[0,n-1]\setminus\{l\}}\lambda_r \mod {\mathfrak{p}},
        \end{equation}
        where $\lambda_r$ is defined by Eq. (\ref{Eq. eigenvalues of Cn(v)}). We next consider $\lambda_r\mod{\mathfrak{p}}$. Suppose $r\in[1,n-1]$. Then 
        \begin{equation*}
        	J_q\left(\omega_q^{-n},\omega_q^{-(n+r)}\right)=\frac{G_q(\omega_q^{-n})G_q(\omega_q^{-(n+r)})}{G_q(\omega_q^{-r})}.
        \end{equation*}
        By Lemma \ref{Lem. the Gross-Koblitz formula} and Lemma \ref{Lem. s(n)+s(n+r) is greater than s(r)}, we obtain 
        \begin{equation*}
        	\ord_p\left(J_q\left(\omega_q^{-n},\omega_q^{-(n+r)}\right)\right)=\frac{1}{p-1}\left(s(n)+s(n+r)-s(r)\right)>0.
        \end{equation*}
        This implies that 
        \begin{equation*}
        	J_q\left(\omega_q^{-n},\omega_q^{-(n+r)}\right)\equiv 0\pmod{\mathfrak{p}}
        \end{equation*}
        for any $r\in[1,n-1]$. Hence by Eq. (\ref{Eq. eigenvalues of Cn(v)}) we have 
        \begin{equation}\label{Eq. C in the proof of Thm. case m=n}
        	\lambda_r\equiv \frac{(-1)^r}{2}J_q\left(\omega_q^{-n},\omega_q^{-r}\right)\pmod{\mathfrak{p}}
        \end{equation}
        whenever $r\in[1,n-1]$. 
        
        {\bf Case I:} $f\ge2$. 
        
        Suppose $r\in[1,n-1]$. Then 
        $$J_q\left(\omega_q^{-n},\omega_q^{-r}\right)=\frac{G_q(\omega_q^{-n})G_q(\omega_q^{-r})}{G_q(\omega_q^{-(n+r)})}.$$
        By Lemma \ref{Lem. the Gross-Koblitz formula} we obtain 
        \begin{equation}\label{Eq. D in the proof of Thm. case m=n}
        	\ord_p\left(J_q\left(\omega_q^{-n},\omega_q^{-r}\right)\right)=\frac{1}{p-1}\left(s(n)+s(r)-s(n+r)\right).
        \end{equation}
        
        As $q=p^f\ge9$, we have $n-1\ge (p+3)/2$. By this, for $r\in\{(p+1)/2,(p+3)/2\}\subseteq[1,n-1]$, we have 
        $$s(n)+s(r)>s(n+r).$$
        This, together with Eq (\ref{Eq. D in the proof of Thm. case m=n}), implies that 
        $$\left|\left\{1\le r\le n-1: J_q\left(\omega_q^{-n},\omega_q^{-r}\right)\equiv 0\pmod{\mathfrak{p}}\right\}\right|=\left|\left\{1\le r\le n-1: \lambda_r\equiv 0\pmod{\mathfrak{p}}\right\}\right|\ge2.$$
        By this, Eq. (\ref{Eq. C in the proof of Thm. case m=n}) and Eq. (\ref{Eq. B in the proof of Thm. case m=n}), we see that $B_q(n)$ is singular if $f\ge2$. 
        
        {\bf Case II:} $f=1$. 
        
        Note first that, in this case, $\mathbb{Q}_p(\zeta_{q-1})=\mathbb{Q}_p$ and $\mathfrak{p}=p\mathbb{Z}_p$. By Eq. (\ref{Eq. D in the proof of Thm. case m=n}), for any $r\in[1,n-1]$, we see that 
        $$\ord_p\left(J_p\left(\omega_p^{-n},\omega_p^{-r}\right)\right)=\frac{1}{p-1}\left(s(n)+s(r)-s(n+r)\right)=\frac{1}{p-1}(n+r-(n+r))=0.$$
        Thus, for any $r\in[1,n-1]$, 
        $$\lambda_r\equiv \frac{(-1)^r}{2}J_q\left(\omega_p^{-n},\omega_p^{-r}\right)\not\equiv0\pmod{p\mathbb{Z}_p},$$
        that is, $\lambda_r$ is a $p$-adic unit. By this and Eq. (\ref{Eq. B in the proof of Thm. case m=n}), we obtain 
        \begin{equation}\label{Eq. E in the proof of Thm. case m=n}
         \det B_q(n)=-2\lambda_0\lambda_1\cdots\lambda_{n-1}\left(\frac{1}{\lambda_0}+\frac{1}{\lambda_1}+\cdots+\frac{1}{\lambda_{n-1}}\right) \mod{p\mathbb{Z}_p}.
        \end{equation}
        
        We next consider $\lambda_r\mod{p\mathbb{Z}_p}$. Suppose $r\in[1,n-1]$. Then By Lemma \ref{Lem. the Gross-Koblitz formula} and Lemma \ref{Lem. basic properties of p adic gamma} one can verify that  
        \begin{align*}
        	\lambda_r&\equiv \frac{(-1)^r}{2}J_p\left(\omega_p^{-n},\omega_p^{-r}\right)\\
        	         &\equiv \frac{(-1)^r}{2}\frac{G_p(\omega_p^{-n})G_p(\omega_p^{-r})}{G_p(\omega_p^{-(n+r)})}\\
        	         &\equiv \frac{(-1)^{r+1}}{2}\cdot\frac{\Gamma_p\left(\frac{n}{p-1}\right)\Gamma_p\left(\frac{r}{p-1}\right)}{\Gamma_p\left(\frac{n+r}{p-1}\right)}\\
        	         &\equiv \frac{(-1)^{r+1}}{2}\cdot\frac{\Gamma_p\left(p-n\right)\Gamma_p\left(p-r\right)}{\Gamma_p\left(p-n-r\right)}\\
        	         &\equiv \frac{(-1)^{r}}{2}\frac{(p-n-1)!\cdot (p-r-1)!}{(p-1-n-r)!}\\
        	         &\equiv \frac{(-1)^{r+1}}{2}\frac{(n+r)!}{n!\cdot r!}\\
        	         &\equiv \frac{(-1)^{r+1}}{2}\binom{n+r}{r}\\
        	         &\equiv -\frac{1}{2}\binom{n}{r}\pmod{p\mathbb{Z}_p}.
        \end{align*}
        The last congruence follows from 
        $$(-1)^r\cdot\binom{n+r}{r}\equiv \binom{n}{r}\pmod{p\mathbb{Z}}$$
        for any $r\in[1,n-1]$. 
        
        By the above results and Sury's identity \cite{Sury}, we obtain 
        \begin{align*}
        	  &-2\lambda_0\lambda_1\cdots\lambda_{n-1}\left(\frac{1}{\lambda_0}+\frac{1}{\lambda_1}+\cdots+\frac{1}{\lambda_{n-1}}\right)\\
        \equiv&	(-1)^{n-1}\left(\frac{1}{2}\right)^{n-2}\cdot\prod_{r=0}^n\binom{n}{r}\cdot\left(3-2\sum_{r=0}^{n}\binom{n}{r}^{-1}\right)\\
        \equiv& (-1)^{n-1}\left(\frac{1}{2}\right)^{n-2}\cdot\frac{(n!)^{n+1}}{(0!1!\cdots n!)^2}\cdot \left(3-\frac{n+1}{2^n}\sum_{k=1}^{n+1}\frac{2^k}{k}\right)\\
        \equiv& (-1)^{n-1}\left(\frac{1}{2}\right)^{n-2}\cdot\frac{(n!)^{n+1}}{(0!1!\cdots n!)^2}\cdot \left(3-\frac{1}{2^{n+1}}\sum_{k=1}^{n+1}\frac{2^k}{k}\right)\\
        \equiv& (-1)^{n-1}\left(\frac{1}{2}\right)^{n-2}\cdot\frac{(n!)^{n+1}}{(0!1!\cdots n!)^2}\cdot \left(1-\frac{1}{2^{n+1}}\sum_{k=1}^{n}\frac{2^k}{k}\right)\pmod{p\mathbb{Z}_p}.
        \end{align*}
        
        By this and using Eq. (\ref{Eq. F in the proof of Thm. case m is less than n+1}) and Eq. (\ref{Eq. c in Lem. congruence involving Pell sequences}), we finally obtain 
        \begin{align*}
        	      &-2\lambda_0\lambda_1\cdots\lambda_{n-1}\left(\frac{1}{\lambda_0}+\frac{1}{\lambda_1}+\cdots+\frac{1}{\lambda_{n-1}}\right)\\
        	\equiv& 
        	(-1)^{n}\left(\frac{1}{2}\right)^{n-2}\cdot\frac{(n!)^{n+1}}{(0!1!\cdots n!)^2}\cdot\frac{2(\frac{2}{p})-2P_p-p}{p}\pmod{p\mathbb{Z}_p}.
        \end{align*}
        
        In view of the above, we have completed the proof. \qed

        \section{Proof of Theorem \ref{Thm. a variant of Carlitz}}
        \setcounter{lemma}{0}
        \setcounter{theorem}{0}
        \setcounter{equation}{0}
        \setcounter{conjecture}{0}
        \setcounter{remark}{0}
        \setcounter{corollary}{0}
        
        We begin with the following lemma.
        
        \begin{lemma}\label{Lem. product of Jacobi sums}
        	Let $\chi_q$ be a generator of $\widehat{\mathbb{F}_q^{\times}}$. Then for any nontrivial character $\psi\in\widehat{\mathbb{F}_q^{\times}}$, the following results hold.
        	\begin{equation}\label{Eq. product of Jacobi sums}
        		A_q =\prod_{r=0}^{q-2}J_q(\psi,\chi_q^r)=\frac{G_q(\psi)^{q-1}}{q}.
        	\end{equation}
           
           \begin{equation}\label{Eq. sum of reciprocal of Jacobi sums}
           	S_q =\sum_{r=0}^{q-2}\frac{1}{J_q(\psi,\chi_q^r)}=\frac{1-q}{q}\left(1+\psi(-1)\right).
           \end{equation}
        
           \begin{equation}\label{Eq. sum of (-1) r reciprocal of Jacobi sums}
           		T_q =\sum_{r=0}^{q-2}\frac{(-1)^r}{J_q(\psi,\chi_q^r)}=\frac{1-q}{q}(2-\overline{\psi(2)}). 
           \end{equation}
        \end{lemma}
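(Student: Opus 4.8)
The plan is to push everything back onto the standard relations among Gauss and Jacobi sums together with character orthogonality. As $r$ ranges over $[0,q-2]$ the character $\chi_q^r$ ranges over all of $\widehat{\mathbb{F}_q^\times}$, so each of $A_q,S_q,T_q$ is really a sum or product over all $\chi\in\widehat{\mathbb{F}_q^\times}$; moreover, since $-1=g^{(q-1)/2}$ for a generator $g$ of $\mathbb{F}_q^\times$ and $\chi_q(g)$ is a primitive $(q-1)$-th root of unity, one has $(-1)^r=\chi_q^r(-1)$, so $T_q=\sum_{\chi}\chi(-1)/J_q(\psi,\chi)$. In each case I will isolate the two ``degenerate'' characters $\chi=\varepsilon$ and $\chi=\overline\psi$, where $J_q(\psi,\varepsilon)=-1$ and $J_q(\psi,\overline\psi)=-\psi(-1)$, and treat the remaining ``generic'' range $\chi\neq\varepsilon,\overline\psi$ (on which $\psi,\chi,\psi\chi$ are all nontrivial) uniformly.

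For \eqref{Eq. product of Jacobi sums}: on the generic range write $J_q(\psi,\chi)=G_q(\psi)G_q(\chi)/G_q(\psi\chi)$. As $\chi$ runs over this range, $\chi$ runs over all characters except $\varepsilon,\overline\psi$ while $\psi\chi$ runs over all characters except $\varepsilon,\psi$; hence the Gauss-sum products $\prod G_q(\chi)$ and $\prod G_q(\psi\chi)$ over these index sets differ only by the factors $G_q(\overline\psi)^{-1}$ and $G_q(\psi)^{-1}$, and the generic contribution collapses to $G_q(\psi)^{q-3}\cdot G_q(\psi)/G_q(\overline\psi)=G_q(\psi)^{q-2}/G_q(\overline\psi)$. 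Multiplying back the two degenerate factors $(-1)\cdot(-\psi(-1))$ and using $G_q(\psi)G_q(\overline\psi)=\psi(-1)q$ gives $A_q=G_q(\psi)^{q-1}/q$.

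For \eqref{Eq. sum of reciprocal of Jacobi sums} and \eqref{Eq. sum of (-1) r reciprocal of Jacobi sums} the engine is the identity
\begin{equation*}
	\frac{1}{J_q(\psi,\chi)}=\frac{\chi(-1)}{q}\,J_q(\psi\chi,\overline\chi)\qquad(\chi\neq\varepsilon,\ \psi\chi\neq\varepsilon),
\end{equation*}
obtained by substituting $G_q(\chi)^{-1}=G_q(\overline\chi)/(\chi(-1)q)$ into $1/J_q(\psi,\chi)=G_q(\psi\chi)/(G_q(\psi)G_q(\chi))$ and recombining $G_q(\psi\chi)G_q(\overline\chi)=J_q(\psi\chi,\overline\chi)G_q(\psi)$. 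Then (using $\chi(-1)^2=1$ for the diagonal factor of $T_q$)
$S_q=-1-\psi(-1)+q^{-1}\sum_{\chi\neq\varepsilon,\overline\psi}\chi(-1)J_q(\psi\chi,\overline\chi)$ and $T_q=-2+q^{-1}\sum_{\chi\neq\varepsilon,\overline\psi}J_q(\psi\chi,\overline\chi)$. Expanding $J_q(\psi\chi,\overline\chi)=\sum_{a\neq0,1}\psi(a)\,\chi\!\bigl(a/(1-a)\bigr)$, swapping the order of summation, and applying $\sum_{\chi\neq\varepsilon,\overline\psi}\chi(z)=-1-\overline\psi(z)$ for $z\neq1$ reduces both to elementary sums: the equation $-a/(1-a)=1$ has no solution (so $S_q$ has no ``resonant'' term), whereas $a/(1-a)=1$ forces $a=1/2$, and this resonant value — after combining with the boundary corrections — contributes $\overline\psi(1/2)(q-1)=\overline{\psi(2)}(q-1)$, which is precisely the source of the $\overline{\psi(2)}$ in \eqref{Eq. sum of (-1) r reciprocal of Jacobi sums}. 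The leftover pieces evaluate at once from $\sum_{a\in\mathbb{F}_q}\psi(a)=0$ together with the rewritings $\psi(a)\overline\psi\!\bigl(-a/(1-a)\bigr)=\psi(a-1)$ and $\psi(a)\overline\psi\!\bigl(a/(1-a)\bigr)=\psi(1-a)$, yielding $\frac{1-q}{q}(1+\psi(-1))$ and $\frac{1-q}{q}(2-\overline{\psi(2)})$ respectively.

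The individual computations are routine; the only steps requiring care are the bookkeeping of which characters lie in the degenerate range — note $\psi=\overline\psi$ when $\psi$ is quadratic, but this merely identifies two degenerate factors already accounted for, and for $q=3$ the generic range is empty, consistent with all three formulas — and the correct isolation of the single resonant value $a=1/2$ that distinguishes $T_q$ from $S_q$.
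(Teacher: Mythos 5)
Your proposal is correct, and for the product formula it is essentially the paper's argument: the paper isolates the single degenerate index $r=j$ (where $\chi_q^j=\overline{\psi}$, giving the factor $-\psi(-1)$), folds the trivial character in via $G_q(\varepsilon)=-1$, and telescopes the Gauss-sum quotients over all characters, which is the same bookkeeping you perform after splitting off $\chi=\varepsilon,\overline{\psi}$. For the two reciprocal sums, however, you take a genuinely different route. You invert each generic Jacobi sum through the reflection identity $1/J_q(\psi,\chi)=\chi(-1)J_q(\psi\chi,\overline{\chi})/q$, expand the resulting Jacobi sums over field elements, swap the order of summation, and apply orthogonality in the form $\sum_{\chi\neq\varepsilon,\overline{\psi}}\chi(z)=-1-\overline{\psi}(z)$ for $z\neq1$, so that the term $\overline{\psi(2)}$ in the $T_q$ formula appears as the single resonance $a=1/2$. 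The paper instead uses $|J_q(\psi,\chi_q^r)|=\sqrt{q}$ to write $1/J=\overline{J}/q$ on the generic range, gets $S_q$ from the vanishing of $\sum_{r}J_q(\psi,\chi_q^r)$, and obtains $T_q$ only indirectly: it computes $S_q+T_q=2\sum_{r}1/J_q(\psi,\chi_q^{2r})$ using the evaluation $\sum_{r\in[0,(q-3)/2]}J_q(\psi,\chi_q^{2r})=\frac{q-1}{2}\psi(2)$ (orthogonality over the squares detecting $x=\pm1$) and then subtracts $S_q$. Both arguments rest on the same standard facts (Gauss--Jacobi relations, modulus $\sqrt{q}$, orthogonality); yours is more direct for $T_q$, at the price of an element-level expansion, while the paper's stays entirely at the level of character sums of Jacobi sums. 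Two small remarks: the resonant contribution should be written $\psi(1/2)(q-1)=\overline{\psi(2)}(q-1)$ rather than $\overline{\psi}(1/2)(q-1)$ (the latter literally equals $\psi(2)(q-1)$), though the formula you ultimately state is the correct one; and your observations that $(-1)^r=\chi_q^r(-1)$ and that the quadratic case $\psi=\overline{\psi}$ and $q=3$ cause no trouble are accurate and match what the paper uses implicitly.
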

        
        \begin{proof}
        	Since $\chi_q$ is a generator of $\widehat{\mathbb{F}_q^{\times}}$ and $\psi\neq\varepsilon$, there is a unique integer $j\in[1,q-2]$ such that $\chi_q^{-j}=\psi$. One can verify that 
        	\begin{align*}
        		\prod_{r=0}^{q-2}J_q(\psi,\chi_q^r)
        		&=\prod_{r=0}^{q-2}J_q(\chi_q^{-j},\chi_q^r)\\
        		&=J_q(\chi_q^{-j},\chi_q^j)\cdot\prod_{r\in[0,q-2]\setminus\{j\}}J_q(\chi_q^{-j},\chi_q^r)\\
        		&=(-1)^{j+1}\cdot\prod_{r\in[0,q-2]\setminus\{j\}}\frac{G_q(\chi_q^{-j})G_q(\chi_q^r)}{G_q(\chi_q^{-j+r})}\\
        		&=(-1)^{j+1}\cdot\frac{G_q(\varepsilon)}{G_q(\chi_q^{-j})G_q(\chi_q^j)}\cdot\prod_{r\in[0,q-2]}\frac{G_q(\chi_q^{-j})G_q(\chi_q^r)}{G_q(\chi_q^{-j+r})}\\
        		&=\frac{G_q(\chi_q^{-j})^{q-1}}{q}\cdot\prod_{r\in[0,q-2]}\frac{G_q(\chi_q^r)}{G_q(\chi_q^{-j+r})}\\
        		&=\frac{G_q(\psi)^{q-1}}{q}.
        	\end{align*}
        This completes the proof of Eq. (\ref{Eq. product of Jacobi sums}). 
        
        Now we prove Eq. (\ref{Eq. sum of reciprocal of Jacobi sums}). As $|J_q(\chi_q^{-j},\chi_q^r)|=\sqrt{q}$ whenever $r\in[1,q-2]\setminus\{j\}$, one can verify that 
        \begin{align*}
        	\sum_{r=0}^{q-2}\frac{1}{J_q(\psi,\chi_q^r)}
        	&=\sum_{r=0}^{q-2}\frac{1}{J_q(\chi_q^{-j},\chi_q^r)}\\
        	&=\frac{1}{J_q(\chi_q^{-j},\varepsilon)}+\frac{1}{J_q(\chi_q^{-j},\chi_q^{j})}+\frac{1}{q}\sum_{r\in[1,q-2]\setminus\{j\}}\frac{q}{J_q(\chi_q^{-j},\chi_q^r)}\\
        	&=-(1+(-1)^{j})+\frac{1}{q}\sum_{r\in[1,q-2]\setminus\{j\}}\overline{J_q(\chi_q^{-j},\chi_q^r)}\\
        	&=-(1+\psi(-1))+\frac{1}{q}\sum_{r\in[0,q-2]}\overline{J_q(\chi_q^{-j},\chi_q^r)}-\frac{1}{q}\overline{J_q(\psi,\varepsilon)}-\frac{1}{q}\overline{J_q(\psi,\psi^{-1})}\\
        	&=\frac{1-q}{q}\left(1+\psi(-1)\right),
        \end{align*}
    where the last equality follows from 
    \begin{equation*}
    	\sum_{r\in[0,q-2]}J_q(\chi_q^{-j},\chi_q^r)=0.
    \end{equation*}
    This completes the proof of Eq. (\ref{Eq. sum of reciprocal of Jacobi sums})

    Finally, we prove Eq. (\ref{Eq. sum of (-1) r reciprocal of Jacobi sums}). We first claim that 
   \begin{equation}\label{Eq. a in the proof of Lem. product of Jacobi sums}
   	\sum_{r\in[0,(q-3)/2]}J_q(\psi,\chi_q^{2r})=\frac{(q-1)\psi(2)}{2}.
   \end{equation}
    In fact, it is easy to verify that 
    \begin{align*}
    	\sum_{r\in[0,(q-3)/2]}J_q(\psi,\chi_q^{2r})
    	&=\sum_{r\in[0,(q-3)/2]}\sum_{x\in\mathbb{F}_q}\psi(1-x)\chi_q^{2r}(x)\\
    	&=\sum_{x\in\mathbb{F}_q}\psi(1-x)\sum_{r\in[0,(q-3)/2]}\chi_q^{2r}(x)\\
    	&=\frac{q-1}{2}\psi(2),
    \end{align*}
     where the last equality follows from 
     \begin{equation*}
     	\sum_{r\in[0,(q-3)/2]}\chi_q^{2r}(x)=\begin{cases}
     		(q-1)/2 & \mbox{if}\ x=\pm 1,\\
     		0       & \mbox{otherwise.}
     	\end{cases}
     \end{equation*}
     By Eq. (\ref{Eq. a in the proof of Lem. product of Jacobi sums}) we obtain 
    \begin{align*}
    	S_q+T_q&=2\sum_{r\in[0,(q-3)/2]}\frac{1}{J_q(\psi,\chi_q^{2r})}\\
    	       &=\frac{2}{J_q(\psi,\varepsilon)}+\frac{1+\psi(-1)}{J_q(\psi,\psi^{-1})}+\frac{2}{q}\sum_{\substack{r\in[1,(q-3)/2]\\ \chi_q^{2r}\neq \psi^{-1}}}\frac{q}{J_q(\psi,\chi_q^{2r})}\\
    	       &=-(3+\psi(-1))+\frac{2}{q}\sum_{\substack{r\in[1,(q-3)/2]\\ \chi_q^{2r}\neq \psi^{-1}}}\overline{{J_q(\psi,\chi_q^{2r})}}\\
    	       &=-(3+\psi(-1))+\frac{2}{q}\sum_{r\in[0,(q-3)/2]}\overline{{J_q(\psi,\chi_q^{2r})}}-\frac{2}{q}\overline{J_q(\psi,\varepsilon)}-\frac{1+\psi(-1)}{q}\overline{J_q(\psi,\psi^{-1})}\\
    	       &=\frac{1-q}{q}\left(3+\psi(-1)-\overline{\psi(2)}\right).    
    \end{align*}
    Combining this with Eq. (\ref{Eq. sum of reciprocal of Jacobi sums}), we have 
    \begin{equation*}
    	T_q=(S_q+T_q)-S_q=\frac{1-q}{q}(2-\overline{\psi(2)}). 
    \end{equation*}

     In view of the above, we have completed the proof.
        \end{proof}
       
     \begin{remark}
     	In 1987, Greene \cite[Definition 2.4]{Greene} used Jacobi sums to obtain a finite field analogue of binomial coefficients. In fact, for any $A,B\in\widehat{\mathbb{F}_q^{\times}}$, Green defined 
     	$$\binom{A}{B}:=\frac{B(-1)}{q}J_q(A,\overline{B}).$$
     	By Eq. (\ref{Eq. sum of (-1) r reciprocal of Jacobi sums}) for any nontrivial character $\psi$, we obtain 
     	\begin{equation}\label{Eq. finite field analogue of Sury identity}
     		\sum_{r=0}^{q-2}\binom{\psi}{\chi_q^r}^{-1}=q\sum_{r=0}^{q-2}\frac{(-1)^r}{J_q(\psi,\chi_q^{-r})}=qT_q=(1-q)(2-\overline{\psi(2)}).
     	\end{equation}
     	The identity (\ref{Eq. finite field analogue of Sury identity}) can be viewed as a finite field analogue of Sury's identity
     		\begin{equation*}
     		\sum_{r=0}^{n-1}\binom{n-1}{r}^{-1}=\frac{n}{2^n}\sum_{k=1}^n\frac{2^k}{k}.
     	\end{equation*}
     \end{remark}

        From now on, we fix a generator $g$ of $\mathbb{F}_q^{\times}$. For any nontrivial character $\psi\in\widehat{\mathbb{F}_q^{\times}}$, define the circulant matrices
        \begin{equation}\label{Eq. definition of Mq}
        	M_q(\psi):=\left[\psi\left(g^{j-i}-1\right)\right]_{0\le i,j\le q-2},
        \end{equation}
        and 
        \begin{equation}\label{Eq. definition of Nq}
        	N_q(\psi):=\left[\psi\left(g^{j-i}+1\right)\right]_{0\le i,j\le q-2}.
        \end{equation}
    We need the following lemma.
    
    \begin{lemma}\label{Lem. eigenvalues of Mq and Nq}
    	Let $\chi_q$ be a generator of $\widehat{\mathbb{F}_q^{\times}}$. Then the numbers
    	$$\alpha_r=\psi(-1)J_q(\psi,\chi_q^r)\ (r=0,1,\cdots,q-2)$$
    	are exactly all the eigenvalues of $M_q(\psi)$. Also, the numbers 
    	$$\beta_r=(-1)^rJ_q(\psi,\chi_q^r)\ (r=0,1,\cdots,q-2)$$
    	are exactly all the eigenvalues of $N_q(\psi)$.
    \end{lemma}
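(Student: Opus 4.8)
The plan is to diagonalize both circulant matrices explicitly, using the standard eigenbasis of a circulant matrix built from powers of a primitive $(q-1)$-th root of unity, and then to recognize each eigenvalue as a Jacobi sum after a single change of variable over $\mathbb{F}_q$. Throughout I will use that $g$ generates $\mathbb{F}_q^{\times}$ and that $\chi_q$ generates $\widehat{\mathbb{F}_q^{\times}}$, so that $\chi_q$ is injective on $\mathbb{F}_q^{\times}$ and $\zeta:=\chi_q(g)$ is a primitive $(q-1)$-th root of unity; in particular, as $m$ runs over $[0,q-2]$ the element $g^m$ runs over all of $\mathbb{F}_q^{\times}$.

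First I would exhibit the eigenvectors. For each $r\in[0,q-2]$ set
\[
	\u_r=\bigl(\chi_q^r(g^0),\chi_q^r(g^1),\cdots,\chi_q^r(g^{q-2})\bigr)^{T}=\bigl(1,\zeta^{r},\zeta^{2r},\cdots,\zeta^{(q-2)r}\bigr)^{T}.
\]
Since $M_q(\psi)=[\psi(g^{j-i}-1)]_{0\le i,j\le q-2}$ is a genuine circulant matrix of size $q-1$ (its $(i,j)$-entry depends only on $j-i \bmod (q-1)$), factoring $\zeta^{ri}$ out of the $i$-th coordinate of $M_q(\psi)\u_r$ and reindexing the inner sum by $k\equiv j-i\pmod{q-1}$ yields
\[
	M_q(\psi)\u_r=\Bigl(\sum_{k=0}^{q-2}\psi(g^k-1)\chi_q^r(g^k)\Bigr)\u_r,
\]
and the identical computation with $g^k-1$ replaced by $g^k+1$ handles $N_q(\psi)$. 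As $\u_0,\cdots,\u_{q-2}$ are the columns of a Vandermonde matrix in the distinct numbers $\zeta^{0},\cdots,\zeta^{q-2}$, they are linearly independent over $\mathbb{C}$, so the scalars just displayed are exactly all the eigenvalues of $M_q(\psi)$, respectively of $N_q(\psi)$.

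It then remains to identify these scalars with Jacobi sums. Since $g^k$ ranges over $\mathbb{F}_q^{\times}$ and the missing term $x=0$ contributes nothing (all characters vanish at $0$ by our convention), the $M_q(\psi)$-eigenvalue equals $\sum_{x\in\mathbb{F}_q}\psi(x-1)\chi_q^r(x)$; using $\psi(x-1)=\psi(-1)\psi(1-x)$ and the symmetry $J_q(\chi_q^r,\psi)=J_q(\psi,\chi_q^r)$ (substitute $x\mapsto 1-x$) this becomes $\psi(-1)J_q(\psi,\chi_q^r)=\alpha_r$. For $N_q(\psi)$ the same manipulation together with the substitution $x\mapsto -x$ gives $\sum_{x\in\mathbb{F}_q}\psi(x+1)\chi_q^r(x)=\chi_q^r(-1)J_q(\psi,\chi_q^r)$; and since $-1=g^{(q-1)/2}$ we have $\chi_q^r(-1)=\zeta^{r(q-1)/2}=(-1)^r$, so the eigenvalue is $(-1)^rJ_q(\psi,\chi_q^r)=\beta_r$.

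I do not expect a genuine obstacle here; the only points that need a little care are the bookkeeping in the reindexing $k\equiv j-i\pmod{q-1}$ (legitimate precisely because $M_q(\psi)$ and $N_q(\psi)$ are true circulants of size $q-1$), the harmless handling of the value at $0$ under the convention $\psi(0)=0$, and the identity $\chi_q(-1)=-1$, which follows from $\chi_q$ being injective of order $q-1$ on $\mathbb{F}_q^{\times}$. One may also remark that although $M_q(\psi)$ and $N_q(\psi)$ depend on the chosen generator $g$, their spectra do not, since a different choice of $g$ conjugates each matrix by a permutation matrix.
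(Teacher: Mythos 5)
Your proposal is correct and follows essentially the same route as the paper: it uses the same eigenvectors $\bigl(\chi_q^r(g^0),\chi_q^r(g^1),\cdots,\chi_q^r(g^{q-2})\bigr)^{T}$, the same circulant/reindexing computation of the eigenvalue as $\sum_{x\in\mathbb{F}_q}\psi(x\mp1)\chi_q^r(x)$, and the same identification with Jacobi sums via $x\mapsto 1-x$ and the factor $\psi(-1)$, respectively $\chi_q^r(-1)=(-1)^r$. The only difference is that you spell out the $N_q(\psi)$ case and the sign $\chi_q(-1)=-1$, which the paper leaves as ``essentially the same method.''
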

    
    \begin{proof}
    	For any $r\in[0,q-2]$, it is easy to see that 
    	\begin{align*}
    		\sum_{j=0}^{q-2}\psi(g^{j-i}-1)\chi_q^r(g^j)
    		&=\sum_{j=0}^{q-2}\psi(g^{j-i}-1)\chi_q^r(g^{j-i})\chi_q^r(g^i)\\
    		&=\sum_{j=0}^{q-2}\psi(g^{j}-1)\chi_q^r(g^{j})\chi_q^r(g^i)\\
    		&=\sum_{x\in\mathbb{F}_q}\psi(x-1)\chi_q^r(x)\chi_q^r(g^i)\\
    		&=\psi(-1)J_q(\psi,\chi_q^r)\chi_q^r(g^i).
    	\end{align*}
    This implies that 
    $$M_q(\psi){\bm \xi}_r=\alpha_r{\bm \xi}_r,$$
    where 
    $${\bm \xi}_r=\left(\chi_q^r(g^0),\chi_q^r(g^1),\cdots,\chi_q^r(g^{q-2})\right)^T.$$
    As these ${\bm \xi}_r$ are linearly independent over $\mathbb{C}$, the numbers $\alpha_0,\alpha_1,\cdots,\alpha_{q-2}$ are all the eigenvalues of $M_q(\psi)$. Using essentially the same method, one can easily obtain that all the eigenvalues of $N_q(\psi)$ are $\beta_0,\beta_1,\cdots,\beta_{q-2}$. 
    
    In view of the above, we have completed the proof.
    \end{proof}
    
    Now we are in a position to prove our last theorem.
    
   {\noindent{\bf Proof of Theorem \ref{Thm. a variant of Carlitz}}.} (i) It is clear that 
   \begin{equation*}
   	G(t)=\frac{t^{q-1}-1}{t-1}=1+t+\cdots+t^{q-2}=\prod_{i=2}^{q-1}\left(t-x_i\right)
   \end{equation*}
   over $\mathbb{F}_q$. This implies 
   \begin{equation}\label{Eq. A in the proof of Thm. a variant of Carlitz}
   	\prod_{i=2}^{q-2}x_i=-1.
   \end{equation}
     Recall that $g$ is a generator of $\mathbb{F}_q^{\times}$. By Eq. (\ref{Eq. A in the proof of Thm. a variant of Carlitz}) we obtain 
    \begin{equation}\label{Eq. B in the proof of Thm. a variant of Carlitz}
    	\det D_q^{-}(\psi)=\prod_{i=2}^{q-1}\psi(x_i)\cdot\det\left[\psi\left(\frac{x_j}{x_i}-1\right)\right]_{2\le i,j\le q-1}=\psi(-1)\cdot\det\left[\psi(g^{j-i}-1)\right]_{1\le i,j\le q-2}.
    \end{equation}  
       Note that  $\left[\psi(g^{j-i}-1)\right]_{1\le i,j\le q-2}$ is an almost circulant matrix.  By Lemma \ref{Lem. determinant formula on almost circulant matrices} and Lemma \ref{Lem. eigenvalues of Mq and Nq}, we obtain 
       \begin{align*}
       	\det\left[\psi(g^{j-i}-1)\right]_{1\le i,j\le q-2}
       	&=\frac{1}{q-1}\cdot\alpha_0\alpha_1\cdots\alpha_{q-2}\cdot\left(\frac{1}{\alpha_0}+\frac{1}{\alpha_1}+\cdots+\frac{1}{\alpha_{q-2}}\right)\\
       	&=\frac{\psi(-1)}{q-1}\cdot\prod_{r=0}^{q-2}J_q(\psi,\chi_q^r)\cdot\left(\sum_{r=0}^{q-2}\frac{1}{J_q(\psi,\chi_q^r)}\right)\\
       	&=-\frac{(1+\psi(-1))}{q^2}G_q(\psi)^{q-1}.
       \end{align*}
        Combining this with Eq. (\ref{Eq. B in the proof of Thm. a variant of Carlitz}), we obtain 
        $$\det D_q^{-}(\psi)=-\frac{(1+\psi(-1))}{q^2}G_q(\psi)^{q-1}.$$
        This completes the proof of (i).
        
        (ii) Similar to Eq. (\ref{Eq. B in the proof of Thm. a variant of Carlitz}), we have 
        \begin{equation}\label{Eq. C in the proof of Thm. a variant of Carlitz}
        	\det D_q^+(\psi)=\psi(-1)\cdot\det\left[\psi(g^{j-i}+1)\right]_{1\le i,j\le q-2}.
        \end{equation}
        Since $\left[\psi(g^{j-i}+1)\right]_{1\le i,j\le q-2}$ is an almost circulant matrix, by  Lemma \ref{Lem. determinant formula on almost circulant matrices} and Lemma \ref{Lem. eigenvalues of Mq and Nq}, one can verify that 
        \begin{align*}
        	\det\left[\psi(g^{j-i}+1)\right]_{1\le i,j\le q-2}
        	&=\frac{1}{q-1}\cdot\beta_0\beta_1\cdots\beta_{q-2}\cdot\left(\frac{1}{\beta_0}+\frac{1}{\beta_1}+\cdots+\frac{1}{\beta_{q-2}}\right)\\
        	&=\frac{(-1)^{(q-1)/2}}{q-1}\cdot\prod_{r=0}^{q-2}J_q(\psi,\chi_q^r)\cdot\left(\sum_{r=0}^{q-2}\frac{(-1)^r}{J_q(\psi,\chi_q^r)}\right)\\
        	&=\frac{(-1)^{(q+1)/2}}{q^2}\left(2-\overline{\psi(2)}\right)G_q(\psi)^{q-1}.
        \end{align*}
        By this and Eq. (\ref{Eq. C in the proof of Thm. a variant of Carlitz}), we finally obtain 
        $$\det D_q^{+}(\psi)=\frac{(-1)^{(q+1)/2}\cdot\psi(-1)}{q^2}\left(2-\overline{\psi(2)}\right)G_q(\psi)^{q-1}.$$

        In view of the above, we have completed the proof. \qed

	\Ack\ The authors would like to thank the referee for helpful comments. 
	We also thank Prof. Hao Pan for his helpful comments and steadfast encouragement.
	
	This research was supported by the Natural Science Foundation of China (Grant Nos. 12101321, 12201291 and 12371004). The first author was supported by the Natural Science Foundation of the Higher Education Institutions of Jiangsu Province (Grant No. 25KJB110010).

\end{document}